\definecolor{vio}{rgb}{0.54, 0.17, 0.89}
\numberwithin{equation}{section}
\theoremstyle{remark}
\newcommand{\abs}[1]{\left\lvert#1\right\rvert}
\def\reals{\hbox{\rm I\kern-.18em R}}
\def\complexes{\hbox{\rm C\kern-.43em
\vrule depth 0ex height 1.4ex width .05em\kern.41em}}
\def\field{\hbox{\rm I\kern-.18em F}} %symbol for field
\newcommand\blfootnote[1]{%
  \begingroup
  \renewcommand\thefootnote{}\footnote{#1}%
  \addtocounter{footnote}{-1}%
  \endgroup
}
\newenvironment{section*}[2][A]{
  \section*{#2}
  \renewcommand\thesection{#1}
  \setcounter{theorem}{0}}{}
\numberwithin{equation}{section}
\theoremstyle{definition}
\newtheorem{Th}{Theorem}[section]
\newtheorem{Lemma}[Th]{Lemma}
\theoremstyle{definition}
\newtheorem{Cor}[Th]{Corollary}
\theoremstyle{definition}
\newtheorem*{Th!}{Theorem}
\newtheorem{Prop}[Th]{Proposition}
\newtheorem{Rem}[Th]{Remark}
\newtheorem{?}[Th]{Problem}
\newcommand{\bb}[1]{\mathbb{#1}}
\renewcommand{\bf}[1]{\textbf{#1}}
\renewcommand{\mod}[1]{ \ (\text{mod } #1 ) }
\newcommand{\fin}[1]{\bb{F}_{q^{#1}}}
\newcommand{\sect}{§}
\begin{document}

\title[Hybrid bounds for prime divisors]{Hybrid bounds for prime divisors}

\author{Gustav Kj\ae rbye Bagger}
\address{School of Science, UNSW Canberra, Australia}
\email{g.bagger@unsw.edu.au}
\date\today
% \subjclass[2000]{Primary: 11Y40; Secondary: 11M06, 11M26, 11B65, 11B68}
\keywords{}

\begin{abstract}
Let $x$ and $n$ be positive integers. We prove a non-trivial lower bound for $x$, dependant only on $\omega_n$, the number of distinct prime factors of $x^n-1$. By considering the divisibility of $\varphi \mid x^n-1$ for $\varphi \mid n$, we obtain a further refinement. This bound has applications for existence problems relating to primitive elements in finite fields.
\end{abstract}

\maketitle
\blfootnote{\textit{Affiliation}: School of Science, The University of New South Wales Canberra, Australia.}
\blfootnote{\textit{Corresponding author}: Gustav Kj\ae rbye Bagger (g.bagger@unsw.edu.au).}
\blfootnote{\textit{Key phrases}: Congruences, Primes in arithmetic progressions, Cyclotomy}
\blfootnote{\textit{2020 Mathematics Subject Classification}: 11A07, 11T22 }

\section{Introduction}
Consider the following problem. Given only the number $\omega_n$ of distinct prime factors of $x^n-1$, what lower bound can one give for $x$? One might bound $x$ via 
\begin{equation}
x^n-1 \geq \prod_{i=1}^{\omega_n} s_i \quad \text{ implying } \quad x \geq \left( 1 + \prod_{i=1}^{\omega_n} s_i \right)^{1/n}, \label{trivialb}
\end{equation}
the product over the $\omega_n$ smallest primes $s_i$. We shall refer to this bound as the \textit{trivial bound} on $x$. In 2022, Booker, Cohen, Leong and Trudgian \cite{BCLT} considered the problem of finding elements with prescribed traces in certain $\fin{}$ extensions. In the special case where $n \in \{3,5\}$ and $q$ a prime power, they proved a hybridised bound for $q$ in terms of certain residue classes. This paper generalises their approach and extends the hybrid bound to any positive integers $n$ and $x$. In particular, we prove the following bound.
\begin{Th} (\textit{Hybrid bound}) \label{hybridb}
Let $n=\prod_{i=1}^m \varphi_i^{a_i}$, $x \in \bb{Z}_{>1}$ and $k_i \in \bb{Z}^+$. Then
$$ x \geq \left. \left. \text{min}\left[\text{max}\left\{\left(1+ \prod_{j=0}^t\prod_{\substack{ i = 1 \\ r_{i,j} \in \Pi^j_{n} }}^{k_j} r_{i,j}\right)^{T(t)} \ \right\vert \ t \in [0,m]\right\} \ \right\vert \ \sum_{s=0}^m k_s= \omega_n \right],  $$
where $\Pi_n^j = \{ \rho \in \bb{P} \mid \rho \equiv 1 \mod{\varphi_j} \wedge \rho \not\equiv 1 \mod{\varphi_m,...,\varphi_{j+1}}\}$ for $j \in \{0,...,m\}$, $\varphi_0 := 1$, $T(t)=\prod_{k=1}^{t}\varphi_k^{-a_k}$ and where $r_{i,j}$ denotes the $i^\text{th}$ element of $\Pi_n^j$.
\end{Th}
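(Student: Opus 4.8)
The plan is to exploit the classical fact that every prime $p \mid x^n-1$ is coprime to $x$ and satisfies $d_p := \operatorname{ord}_p(x) \mid n$, together with $d_p \mid p-1$. First I would record how the sets $\Pi_n^0,\dots,\Pi_n^m$ organise these primes: assigning to each prime $\rho$ the largest index $j$ with $\rho \equiv 1 \pmod{\varphi_j}$ (and $j=0$ if no such index exists, since $\varphi_0=1$), the family $\{\Pi_n^j\}_{j=0}^m$ partitions $\mathbb{P}$. Writing $k_j^\ast$ for the number of prime divisors of $x^n-1$ lying in $\Pi_n^j$, this partition guarantees $\sum_{j=0}^m k_j^\ast = \omega_n$, so $(k_0^\ast,\dots,k_m^\ast)$ is one admissible tuple in the minimisation (I interpret the minimisation over nonnegative tuples, with empty products read as $1$, so that $(k_j^\ast)$ is admissible even when some class contains no divisor of $x^n-1$).

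The technical heart is the following divisibility lemma. Set $N_t := \prod_{k=1}^t \varphi_k^{a_k}$, so that $T(t)=1/N_t$ and $N_t \mid n$. I claim that if $p \mid x^n-1$ with $p \in \Pi_n^j$ for some $j \le t$, then in fact $p \mid x^{N_t}-1$. Indeed, by definition of $\Pi_n^j$ we have $p \not\equiv 1 \pmod{\varphi_i}$ for every $i > j$, hence for every $i > t$; since $d_p \mid p-1$, this forces $\varphi_i \nmid d_p$ for all $i>t$. Thus every prime factor of $d_p$ lies in $\{\varphi_1,\dots,\varphi_t\}$, and as $d_p \mid n = \prod_{k=1}^m \varphi_k^{a_k}$ its $\varphi_k$-adic valuation is at most $a_k$; therefore $d_p \mid N_t$, whence $p \mid x^{d_p}-1 \mid x^{N_t}-1$.

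Given the lemma, fix any $t \in [0,m]$. All prime divisors of $x^n-1$ belonging to $\Pi_n^0 \cup \dots \cup \Pi_n^t$ are distinct primes dividing $x^{N_t}-1$, so their product divides $x^{N_t}-1$. There are $k_j^\ast$ of them in each class $\Pi_n^j$, and the product of any $k_j^\ast$ primes from $\Pi_n^j$ is minimised by the $k_j^\ast$ smallest, namely $r_{1,j},\dots,r_{k_j^\ast,j}$. This yields
\[
x^{N_t}-1 \ \ge\ \prod_{j=0}^t \prod_{i=1}^{k_j^\ast} r_{i,j},
\qquad\text{hence}\qquad
x \ \ge\ \left(1+\prod_{j=0}^t \prod_{i=1}^{k_j^\ast} r_{i,j}\right)^{T(t)} .
\]
Since this holds for every $t$, the quantity $x$ dominates the maximum over $t\in[0,m]$ of the right-hand side evaluated at the tuple $(k_0^\ast,\dots,k_m^\ast)$.

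Finally, because the true tuple $(k_j^\ast)$ is not known a priori — only its total $\omega_n$ is — I would replace it by the worst admissible case: the maximum-over-$t$ expression at $(k_j^\ast)$ is at least its minimum over all tuples with $\sum_{s=0}^m k_s=\omega_n$, which is exactly the claimed right-hand side. The main obstacle I anticipate is not any single estimate but the bookkeeping around the lemma and this final min--max step: one must verify that the congruence conditions defining $\Pi_n^j$ genuinely control which primes $\varphi_k$ can appear in $d_p$ (so that the descent from $x^n-1$ to $x^{N_t}-1$ is valid for all $j\le t$ simultaneously), and that passing to the minimum over admissible tuples weakens, but preserves, the inequality rather than destroying it.
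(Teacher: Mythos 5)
Your proposal is correct, and its combinatorial skeleton --- partitioning the prime divisors of $x^n-1$ into the classes $\Pi_n^j$, replacing each class's contribution by the product of the smallest admissible primes, taking the maximum over $t$ and then the minimum over all tuples summing to $\omega_n$ --- is exactly the paper's. Where you genuinely differ is in how you establish the key divisibility fact that a prime divisor of $x^n-1$ lying in $\Pi_n^j$ with $j\le t$ already divides $x^{N_t}-1$. The paper gets this from an explicit $\mathbb{Z}[x]$-factorisation of $x^n-1$ into iterated cyclotomic polynomials $\Phi_{\varphi_i}\bigl(x_i^{\varphi_i^{j-1}}\bigr)$ (Proposition \ref{sensiblefact}) combined with the congruence condition on divisors of $\Phi_\varphi$ (Proposition \ref{arithprog}), applied level by level to peel off one factor $x_{s+1}-1 = (x_s-1)\prod_j \Phi_{\varphi_s}(\cdot)$ at a time. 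You instead argue directly with the multiplicative order $d_p=\operatorname{ord}_p(x)$: since $d_p\mid n$ and $d_p\mid p-1$, the congruences defining $\Pi_n^j$ exclude $\varphi_{t+1},\dots,\varphi_m$ from $d_p$, forcing $d_p\mid N_t$ and hence $p\mid x^{N_t}-1$. Your route is shorter and bypasses the cyclotomic machinery entirely; the paper's factorisation buys more for the rest of the paper, since the ramification bound (Theorem \ref{ramifb}) needs to track the extra powers of $\varphi_\alpha$ contributed by each individual cyclotomic factor, which the order argument alone does not expose. Two small points to make explicit in a final write-up: $d_p$ is defined because $p\mid x^n-1$ forces $\gcd(p,x)=1$; and your reading of the minimisation as ranging over nonnegative tuples (with empty products equal to $1$) is the intended one, matching the paper's own choice $k_s=\abs{\Omega_n^s(x)}$, which can vanish.
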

To emphasise the improvement Theorem \ref{hybridb} provides over the trivial bound \eqref{trivialb}, consider the case where $n=3$ and $\omega_3 =10$. Here the trivial bound gives $x^3-1 \geq \prod_{i=1}^{10}s_i $, implying $x \geq 1864$. In contrast, Theorem \ref{hybridb} takes into account the factorisation $x^3-1 = (x-1)(x^2 + x + 1)$ in $\bb{Z}[x]$ and the fact that any prime divisor $\rho \mid x^2+x+1$ satisfies $\rho \equiv 0,1 \mod{3}$. Thus, either $\rho$ is in a restricted congruence class or $\rho \mid x-1$ and we call Theorem \ref{hybridb} a \textit{hybrid bound} since it is a hybrid between bounding $x-1$ and $x^n-1$ simultaneously. Playing these criteria on $\rho$ off against each other results in the bound $x \geq 3032$, a $63\%$ improvement over \eqref{trivialb}. Based on calculations, for fixed $n \neq 2^a$, the hybrid bound is expected to outperform the trivial bound at an ever increasing rate as $\omega_n \to \infty$. For fixed $\omega_n$ and varying $n$, calculations also support the assertion that the largest relative improvement is obtained when $n$ is prime. In 2010, Cohen \cite{Cohen} considered the problem of finding primitive elements on certain lines in $\fin{n}$ for small $n$. In the special case where $n=4$ and $q$ is a prime power, Cohen noted that $q$ odd implies $2^4 \mid q^4-1$. It follows that one obtains an improvement over \eqref{trivialb}:
$$q \text{ odd implies } \quad q \geq \left(1+ 8 \prod_{i=1}^{\omega_n} s_i\right)^{1/4}. $$
This result is suggestive of additional structure arising from divisibility $\varphi \mid x^n-1$ for $\varphi \mid n$. This paper generalises Cohen's approach to any positive integers $n$ and $x$, obtaining a refinement over the hybrid bound.
\begin{Th} (\textit{Ramification bound}) \label{ramifb}
Let $n=\prod_{i=1}^m \varphi_i^{a_i}$, $x \in \bb{Z}_{>1}$ and $k_i \in \bb{Z}^+$. For any prime factor $\varphi_\alpha$ of $n$, we have
$$ x \geq \left. \left. \text{min}\left[\text{max}\left\{\left(1+ \prod_{j=0}^t\delta_\alpha(j)\prod_{\substack{ i = 1 \\ r_{i,j} \in \Pi^j_{n} }}^{k_j} r_{i,j}\right)^{T(t)} \ \right\vert \ t \in [0,m]\right\} \ \right\vert \ \sum_{s=0}^m k_s= \omega_n \right], $$
where $T(t)=\prod_{k=1}^{t}\varphi_k^{-a_k}$ and $\delta_\alpha(j)$ is defined as
 \begin{align*}
\delta_\alpha(j) = & \left\{ 
\begin{array}{ccc}
\varphi_\alpha/r_{k_\tau,\tau} & \vline & j = \tau \wedge k_\tau < b \\
\varphi_\alpha^{a_\alpha} & \vline & j = \alpha \\
1 & \vline & \text{elsewhere}
\end{array} 
\right. & \text{ for } \varphi_\alpha \mid x^n-1 \\
\delta_\alpha(j) = & \left\{ 
\begin{array}{ccc}
r_{k_\tau+1,\tau}/\varphi_\alpha & \vline & j = \tau \wedge k_\tau \geq b \\
1 & \vline & \text{elsewhere}
\end{array} 
\right. & \text{ for } \varphi_\alpha \nmid x^n-1.
\end{align*} 
Here, we take $\tau$ such that $\varphi_\alpha \in \Pi_n^\tau$ and $b$ such that $\varphi_\alpha = r_{b,\tau}$. 
\end{Th}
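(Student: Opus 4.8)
The plan is to run the argument of Theorem~\ref{hybridb} essentially verbatim, inserting one extra arithmetic input at the prime $\varphi_\alpha$. Recall that the engine behind Theorem~\ref{hybridb} is the factorisation $x^n-1=\prod_{d\mid n}\Phi_d(x)$ together with the fact that a prime $\rho\nmid d$ divides $\Phi_d(x)$ only when $\rho\equiv 1\mod{d}$; consequently every prime $\rho\in\Pi_n^j$ has multiplicative order $e=\mathrm{ord}_\rho(x)$ whose prime support lies in $\{\varphi_1,\dots,\varphi_j\}$, so that $\rho\mid x^{d_t}-1$ for the divisor $d_t:=\prod_{k=1}^t\varphi_k^{a_k}$ whenever $t\geq j$. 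Summing over distinct primes yields $x^{d_t}-1\geq\prod_{j=0}^t\prod_{i=1}^{k_j}r_{i,j}$ for the true allocation $(k_0,\dots,k_m)$, and raising to the power $T(t)=1/d_t$ and optimising over $t$ (maximum) and over the unknown allocation (minimum) produces the stated bound. I would reproduce exactly this skeleton and only modify the product of primes.

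The new ingredient is a ramification estimate for $\varphi_\alpha$ via Lifting the Exponent. First I would record the lemma: if $\varphi_\alpha\mid x^n-1$ and $e=\mathrm{ord}_{\varphi_\alpha}(x)$, then $e\mid\varphi_\alpha-1$ is coprime to $\varphi_\alpha$, and since $\varphi_\alpha\in\Pi_n^\tau$ forces the support of $e$ into $\{\varphi_1,\dots,\varphi_\tau\}$ we get $e\mid d_\tau\mid d_t$ for $t\geq\tau$; LTE then gives $v_{\varphi_\alpha}(x^{d_t}-1)\geq 1+v_{\varphi_\alpha}(d_t)$. In particular for $t\geq\alpha$ one has $v_{\varphi_\alpha}(d_t)=a_\alpha$, so beyond the single factor $\varphi_\alpha$ already counted as a prime divisor there is an additional factor $\varphi_\alpha^{a_\alpha}$ available in $x^{d_t}-1$; this is exactly $\delta_\alpha(\alpha)$. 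For $\varphi_\alpha=2$ the dyadic form of LTE gives a strictly larger power, so $\varphi_\alpha^{a_\alpha}$ remains a valid lower bound and subsumes Cohen's $n=4$ observation.

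With this in hand I would split on whether $\varphi_\alpha\mid x^n-1$ and adjust the minimal-product selection inside $\Pi_n^\tau$. If $\varphi_\alpha\mid x^n-1$ then $\varphi_\alpha=r_{b,\tau}$ is a genuine prime divisor and must appear among the $k_\tau$ primes charged to $\Pi_n^\tau$; when $k_\tau<b$ the smallest admissible product is obtained by discarding $r_{k_\tau,\tau}$ in favour of $\varphi_\alpha$, which multiplies the naive product by $\varphi_\alpha/r_{k_\tau,\tau}>1$, matching $\delta_\alpha(\tau)$, while for $k_\tau\geq b$ no change is needed. If instead $\varphi_\alpha\nmid x^n-1$ then $\varphi_\alpha$ can never be one of the realised divisors, so when $k_\tau\geq b$ it must be deleted from the list of the $k_\tau$ smallest primes and replaced by the next one, multiplying by $r_{k_\tau+1,\tau}/\varphi_\alpha=\delta_\alpha(\tau)$; for $k_\tau<b$ it was never used and $\delta_\alpha(\tau)=1$. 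Inserting these factors into $x^{d_t}-1\geq\prod_{j=0}^t\delta_\alpha(j)\prod_{i=1}^{k_j}r_{i,j}$, taking $T(t)$-th powers, and running the same optimisation gives the claimed bound.

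The main obstacle will be bookkeeping the ranges of $t$ so that every correction factor is genuinely present in $x^{d_t}-1$ at the moment it is used: the forcing/deletion correction $\delta_\alpha(\tau)$ is only legitimate once $t\geq\tau$ (so that primes of $\Pi_n^\tau$ actually divide $x^{d_t}-1$), and the ramification boost $\delta_\alpha(\alpha)=\varphi_\alpha^{a_\alpha}$ only once $t\geq\alpha$ (so that $\varphi_\alpha^{a_\alpha}\mid d_t$); for $t<\tau$ every $\delta_\alpha(j)=1$ and the estimate degenerates to Theorem~\ref{hybridb}. Since the product in the statement runs only over $j\in\{0,\dots,t\}$ these activation thresholds are encoded automatically, but I would still verify carefully that each inequality $x^{d_t}-1\geq\prod_{j=0}^t\delta_\alpha(j)\prod_i r_{i,j}$ remains valid simultaneously with the minimality of the chosen primes, and that the two corrections never collide — they cannot, since at most one of $k_\tau<b$, $k_\tau\geq b$ holds and the two cases $\varphi_\alpha\mid x^n-1$, $\varphi_\alpha\nmid x^n-1$ are mutually exclusive.
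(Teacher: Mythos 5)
Your proposal is correct and follows essentially the same route as the paper: the same case split on $\varphi_\alpha \mid x^n-1$, the same replacement or deletion of $\varphi_\alpha$ in the minimal product over $\Pi_n^\tau$ according to whether $k_\tau < b$ or $k_\tau \geq b$, and the same insertion of the extra factor $\varphi_\alpha^{a_\alpha}$ once $t \geq \alpha$, all grafted onto the optimisation scheme of Theorem \ref{hybridb}. The only real difference is in how the ramification boost is justified: you invoke Lifting the Exponent to get $v_{\varphi_\alpha}(x^{d_t}-1) \geq 1 + v_{\varphi_\alpha}(d_t)$, whereas the paper reads the same divisibility off its $\bb{Z}[x]$-factorisation (Proposition \ref{sensiblefact}) via the lemma that $\varphi_\alpha \mid x^n-1$ forces $\varphi_\alpha \mid \Phi_{\varphi_\alpha}\bigl(x_\alpha^{\varphi_\alpha^{j-1}}\bigr)$ for every $j \in [1,a_\alpha]$ — an equivalent and equally valid derivation.
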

Returning to the example where $n=3$ and $\omega_3=10$, Theorem \ref{ramifb} gives:
$$ x \geq B = \left\{ \begin{array}{ccc}
4387 & \text{ for } & 3 \mid x^3-1  \\
5423 & \text{ for } & 3 \nmid x^3-1 
\end{array} \right. .$$
In both divisibility cases, we obtain a significant improvement over both the hybrid and trivial bounds. In the special case where $n=2^a$ for some $a \in \bb{Z}^+$, the hybrid bound coincides with the trivial bound. In this setting, we instead prove a modified version of the ramification bound.
\begin{Th}
Let $x,a \in \bb{Z}^+$ and $n = 2^a$. Then \label{modramb}
$$ x \geq \text{min}\left[\left(1+ 2 n \cdot \prod\limits_{i=1}^{\omega_n} s_i \right)^{1/n},\left(1+ \prod\limits_{i=2}^{\omega_n+1} s_i \right)^{1/n}\right]. $$ 
\end{Th}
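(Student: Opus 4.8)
The plan is to split on the parity of $x$. In the case $n = 2^a$ the only congruence structure available comes from the prime $2$: for odd $x$ a large power of $2$ is forced to divide $x^n-1$, whereas for even $x$ the prime $2$ is excluded from the factorisation entirely. Each parity produces exactly one of the two quantities inside the minimum, and since every $x$ is either odd or even, $x$ must dominate the smaller of the two. This is also why the hybrid bound degenerates here: the set $\Pi_n^1$ consists of \emph{all} odd primes (the condition $\rho\equiv 1\mod{2}$ being vacuous), so no proper congruence restriction survives, and the improvement must instead be extracted from $2$-adic ramification, à la Cohen.

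First I would treat odd $x$. The key input is the valuation estimate $v_2(x^n-1)\geq a+2$. I would obtain this from the repeated difference-of-squares factorisation $x^n-1=(x-1)\prod_{k=0}^{a-1}\bigl(x^{2^k}+1\bigr)$. Since $x$ is odd, $x^2\equiv 1 \mod{8}$, so each factor $x^{2^k}+1\equiv 2\mod{8}$ contributes exactly $v_2=1$ for $k\geq 1$ (there are $a-1$ such factors), while the base factor $(x-1)(x+1)$ is a product of consecutive even integers and contributes $v_2\geq 3$. Summing gives $v_2(x^n-1)\geq 3+(a-1)=a+2$, i.e. $2^{a+2}=4n\mid x^n-1$ (equivalently the $p=2$ lifting-the-exponent identity $v_2(x^n-1)=v_2(x-1)+v_2(x+1)+v_2(n)-1$). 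The prime $2$ thus occupies one of the $\omega_n$ distinct prime slots while contributing at least $4n$; the remaining $\omega_n-1$ prime divisors are odd and distinct, so their product is at least $\prod_{i=2}^{\omega_n}s_i$, the product of the $\omega_n-1$ smallest odd primes. Combining, $x^n-1\geq 4n\prod_{i=2}^{\omega_n}s_i=2n\prod_{i=1}^{\omega_n}s_i$ (absorbing $s_1=2$), which rearranges to $x\geq\bigl(1+2n\prod_{i=1}^{\omega_n}s_i\bigr)^{1/n}$, the first entry of the minimum.

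Next I would treat even $x$. Then $x^n-1$ is odd, so all $\omega_n$ of its distinct prime factors are odd; the smallest $\omega_n$ odd primes are $s_2,\dots,s_{\omega_n+1}$, whence $x^n-1\geq\prod_{i=2}^{\omega_n+1}s_i$ and $x\geq\bigl(1+\prod_{i=2}^{\omega_n+1}s_i\bigr)^{1/n}$, the second entry. Since $x$ is odd or even, it satisfies at least one of the two inequalities and therefore exceeds their minimum, which gives the claim.

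The only genuinely delicate step is the $2$-adic count yielding $2^{a+2}\mid x^n-1$: one must isolate the base factor $(x-1)(x+1)$, which always supplies $2^3$, from the higher factors $x^{2^k}+1$, which each supply exactly $2^1$, and confirm the boundary case $a=1$ (where no higher factors appear) as well as $\omega_n=1$ (where the odd-prime product is empty and the estimate reads $x^n-1\geq 4n$). Everything else reduces to the routine fact that a product of $k$ distinct primes of a prescribed type is at least the product of the $k$ smallest such primes.
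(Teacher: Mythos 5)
Your proposal is correct and follows essentially the same route as the paper: the same factorisation $x^n-1=(x-1)(x+1)\prod_{k\geq 1}(x^{2^k}+1)$, the same parity split, and the same observation that one of $x\pm 1$ is divisible by $4$ so that $2^{a+2}=4n$ divides $x^n-1$ when $x$ is odd, while all prime divisors are odd when $x$ is even. Your explicit $2$-adic bookkeeping ($v_2\geq 3$ from the base factor plus $1$ from each of the $a-1$ higher factors) is just a more careful restatement of the paper's count of $a+1$ even factors with one divisible by $4$.
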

If we consider $n=8$ and $\omega_8=20$, then \eqref{trivialb} gives the bound  $x \geq 2205$. In contrast, Theorem \ref{modramb} results in the bound $x \geq 3118$, a $41 \%$ improvement. For fixed $n=2^a$ and $\omega_n$ sufficiently large, Theorem \ref{modramb} outperforms the trivial bound by a factor of $(2n)^{1/n}$. We call Theorem \ref{ramifb} a \textit{ramification bound} due to the following observation: A prime $\rho \in \bb{Z}$ is ramified in the cyclotomic extension $\bb{Q}(e^{2 \pi i / n})/ \bb{Q}$ if and only if $\rho \mid n$. See Washington \cite[p. 10]{washington} for a proof of this fact. Bounds dependant on $\omega_n$ have applications to numerous existence results concerning primitive elements in $\fin{}$ extensions. By applying our hybridised bounds, we can sometimes obtain immediate improvements to previous known results. We discuss two such applications in detail: Cohen's line problem \cite{Cohen} and Liao, Li and Pu's primitive elements sum problem \cite{LLP}. In general, sieves reliant on $\omega_n$ are natural settings for these hybridised bounds. See for example Lemos, Neumann and Ribas \cite{Lemos}, Rani et al. \cite{Rani}, Sharma \cite{Sharma2} and \cite{Sharma1} or Takshak et al. \cite{Takshak}. \\

The outline of this paper is as follows. In \sect 2, we develop some elementary lemmas. In \sect 3, we utilise the lemmas to prove the hybrid and ramification bounds in the setting where $n$ is prime. In \sect 4, we extend the lemmas to composite $n$ and prove a useful factorisation of $x^n-1$ in $\bb{Z}[x]$. The proof of Theorem \ref{hybridb} then follows from these observations. In \sect 5 we prove Theorem \ref{ramifb} and Theorem \ref{modramb}. In \sect 6, we conclude with applications to finite field theory.

\section{Preliminaries}
Let $\bb{P}$ denote the set of rational primes. Throughout this section, we assume that $n = \varphi$ for some $\varphi \in \bb{P}$. This will provide the base case for the general construction. Denote the $k^\text{th}$ cyclotomic polynomial by $\Phi_k(x)$ and recall that $ \Phi_\varphi(x) = \sum_{i=0}^{\varphi-1}x^{i}$ in $\bb{Z}[x]$. We have the $\bb{Z}[x]$-factorisation
$$ x^\varphi-1 = \prod_{d \mid \varphi} \Phi_d(x) = \Phi_1(x) \Phi_\varphi(x) = (x-1) \Phi_\varphi(x). $$
Our aim will be to classify the prime divisors of $x^\varphi-1$ in terms of the divisors of $x-1$ and $\Phi_\varphi(x)$. To that end, we have the following lemma.
\begin{Lemma}
Let $\rho,\varphi \in \bb{P}$ and $x\in \bb{Z}^+$. If \label{onlyp}
$\rho \mid x-1,\Phi_\varphi(x) $ then $ \rho=\varphi$.
\end{Lemma}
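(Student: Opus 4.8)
The plan is to show that if a prime $\rho$ divides both $x-1$ and $\Phi_\varphi(x)$, then $\rho = \varphi$. The key observation is that $\rho \mid x-1$ means $x \equiv 1 \pmod{\rho}$, so I can evaluate $\Phi_\varphi(x)$ modulo $\rho$ by substituting $x \equiv 1$.

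First I would use the explicit formula $\Phi_\varphi(x) = \sum_{i=0}^{\varphi-1} x^i = 1 + x + x^2 + \cdots + x^{\varphi-1}$, which holds in $\bb{Z}[x]$ as noted in the preamble to the lemma. Since $\rho \mid x-1$, we have $x \equiv 1 \mod{\rho}$, and therefore each power $x^i \equiv 1 \mod{\rho}$. Substituting into the sum gives
$$ \Phi_\varphi(x) \equiv \sum_{i=0}^{\varphi-1} 1 = \varphi \mod{\rho}. $$
Now the hypothesis $\rho \mid \Phi_\varphi(x)$ means $\Phi_\varphi(x) \equiv 0 \mod{\rho}$, so combining these congruences yields $\varphi \equiv 0 \mod{\rho}$, i.e. $\rho \mid \varphi$.

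Finally, since $\varphi$ is prime by assumption and $\rho$ is a prime dividing $\varphi$, the only possibility is $\rho = \varphi$. This completes the argument.

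There is no serious obstacle here; the proof is a direct modular substitution. The only point requiring a little care is ensuring that the reduction $x^i \equiv 1 \mod{\rho}$ is applied uniformly across all $\varphi$ terms of the cyclotomic sum, which is immediate from $x \equiv 1 \mod{\rho}$. The result that $\rho \mid \varphi$ forces $\rho = \varphi$ relies only on the primality of $\varphi$, which is a standing hypothesis in this section.
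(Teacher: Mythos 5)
Your proof is correct and follows essentially the same route as the paper: reduce modulo $\rho$ using $x \equiv 1 \mod{\rho}$, evaluate $\Phi_\varphi(1) = \varphi$, and conclude $\rho \mid \varphi$, hence $\rho = \varphi$ by primality. You merely spell out the substitution into the explicit sum $\sum_{i=0}^{\varphi-1} x^i$ where the paper writes $\Phi_\varphi(x) \equiv \Phi_\varphi(1)$ directly.
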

\begin{proof}
If $\rho \mid x-1$ then $x \equiv 1 \mod{\rho}$. Thus $ 0 \equiv \Phi_\varphi(x) \equiv \Phi_\varphi(1) \equiv \varphi \mod{\rho}$. Then $\rho \mid \varphi $, implying $\rho=\varphi$, as required. \qedhere
\end{proof}
Thus, apart from potential twice-divisibility by $\rho$, no other prime factor divides both $x-1$ and $\Phi_\varphi(x)$. In fact, if $\varphi$ divides $x^\varphi-1$, then $\varphi^2$ divides $x^\varphi-1$. This follows from the lemma below.
\begin{Lemma}
Let $\varphi \in \bb{P}$ and $x\in \bb{Z}^+$. Then \label{bothornone}
$\varphi \mid x^\varphi-1$ if and only if $\varphi \mid x-1,\Phi_\varphi(x)$.
\end{Lemma}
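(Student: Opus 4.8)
The plan is to prove the two implications separately. The backward direction is immediate from the factorisation, while the forward direction rests on Fermat's little theorem combined with the evaluation $\Phi_\varphi(1) = \varphi$ already exploited in Lemma \ref{onlyp}.

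For the backward direction, suppose $\varphi \mid x-1$ and $\varphi \mid \Phi_\varphi(x)$. Using the identity $x^\varphi - 1 = (x-1)\Phi_\varphi(x)$ recorded above, we conclude $\varphi \mid x^\varphi-1$ at once. (In fact divisibility by either factor would already suffice, so this direction carries no real content.)

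For the forward direction, the key observation is Fermat's little theorem: for any integer $x$ we have $x^\varphi \equiv x \mod{\varphi}$, and hence $x^\varphi - 1 \equiv x-1 \mod{\varphi}$. This gives the equivalence $\varphi \mid x^\varphi - 1 \iff \varphi \mid x-1$, so assuming $\varphi \mid x^\varphi-1$ yields $\varphi \mid x-1$, i.e. $x \equiv 1 \mod{\varphi}$. It then remains to deduce $\varphi \mid \Phi_\varphi(x)$, for which I would reuse the evaluation argument from Lemma \ref{onlyp}: reducing modulo $\varphi$ and substituting $x \equiv 1$ gives $\Phi_\varphi(x) \equiv \Phi_\varphi(1) \equiv \varphi \equiv 0 \mod{\varphi}$, so $\varphi \mid \Phi_\varphi(x)$, as required.

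The only step demanding any care is the appeal to Fermat's little theorem to convert divisibility of $x^\varphi - 1$ into divisibility of $x-1$; everything else is a direct substitution into $\Phi_\varphi$. I do not anticipate a genuine obstacle, since the assertion is essentially a repackaging of Fermat's theorem together with the fact that $\Phi_\varphi(1) = \varphi$.
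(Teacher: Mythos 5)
Your proof is correct and follows essentially the same route as the paper's: Fermat's little theorem to get $x^\varphi-1\equiv x-1 \mod{\varphi}$, the substitution $\Phi_\varphi(x)\equiv\Phi_\varphi(1)\equiv\varphi \mod{\varphi}$ for the second divisibility, and the factorisation $x^\varphi-1=(x-1)\Phi_\varphi(x)$ for the converse. No gaps.
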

\begin{proof} 
We have $ \varphi \mid x^\varphi-1 $ if and only if $ x^\varphi-1 \equiv 0 \mod{\varphi}$. By Fermat's little theorem, we have $ 0 \equiv x^\varphi-x \equiv 1-x \mod{\varphi}, $ implying $\varphi \mid x-1$. Therefore, we have $ \Phi_\varphi(x) \equiv \Phi_\varphi(1) \equiv \varphi \equiv 0 \mod{\varphi} $, as required. The reverse implication follows trivially from the factorisation of $x^\varphi-1$ in $\bb{Z}[x]$. \qedhere

\end{proof}
A priori, it is not clear why it should be advantageous to factorise $x^\varphi-1$ in $\bb{Z}[x]$ in this way. In order to leverage this form, we require a necessary condition for divisibility of $\Phi_\varphi(x)$. 
\begin{Prop}
Let $\rho,\varphi \in \bb{P}$ with $\rho \neq \varphi$ and $x\in \bb{Z}^+$. Then \label{arithprog}
$\rho \mid \Phi_\varphi(x)$ implies $\rho \equiv 1 \mod{\varphi}$.
\end{Prop}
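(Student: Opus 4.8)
The plan is to analyse the multiplicative order of $x$ modulo $\rho$ and exploit the primality of $\varphi$. First I would observe that since $\Phi_\varphi(x) \mid x^\varphi - 1$ in $\bb{Z}[x]$, the hypothesis $\rho \mid \Phi_\varphi(x)$ gives $\rho \mid x^\varphi - 1$, hence $x^\varphi \equiv 1 \mod{\rho}$. This immediately forces $\rho \nmid x$ (otherwise $x^\varphi \equiv 0$), so $x$ is a unit modulo $\rho$ and has a well-defined multiplicative order $d$ in $(\bb{Z}/\rho\bb{Z})^\times$.

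Next I would use that $x^\varphi \equiv 1 \mod{\rho}$ implies $d \mid \varphi$. Since $\varphi$ is prime, the only possibilities are $d = 1$ or $d = \varphi$. The crux of the argument is to rule out the case $d = 1$. If $d = 1$, then $x \equiv 1 \mod{\rho}$, so that $\Phi_\varphi(x) \equiv \Phi_\varphi(1) = \varphi \mod{\rho}$; combined with $\rho \mid \Phi_\varphi(x)$ this yields $\rho \mid \varphi$, whence $\rho = \varphi$ as both are prime. This contradicts the standing assumption $\rho \neq \varphi$. (This step is essentially a restatement of the mechanism behind Lemma \ref{onlyp}.)

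It follows that $d = \varphi$. Finally I would invoke Fermat's little theorem, or equivalently Lagrange's theorem applied to the group $(\bb{Z}/\rho\bb{Z})^\times$ of order $\rho - 1$, to conclude that the order $d$ divides $\rho - 1$. Therefore $\varphi \mid \rho - 1$, which is precisely $\rho \equiv 1 \mod{\varphi}$, as required.

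I do not anticipate a serious obstacle here, as the result is a standard fact about prime divisors of cyclotomic values; the only point requiring care is the disposal of the order-one case, where one must verify coprimality of $x$ and $\rho$ and handle the coincidence $\rho = \varphi$ cleanly, rather than the routine order-dividing-$(\rho-1)$ deduction.
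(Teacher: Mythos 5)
Your proof is correct and follows essentially the same route as the paper: reduce to $x^\varphi \equiv 1 \pmod{\rho}$, note the multiplicative order of $x$ divides the prime $\varphi$, rule out order $1$ via the observation that $x \equiv 1 \pmod{\rho}$ would force $\rho \mid \Phi_\varphi(1) = \varphi$ (the content of Lemma \ref{onlyp}, which the paper cites and you rederive inline), and conclude by Lagrange. Your explicit check that $\rho \nmid x$ is a small point of extra care the paper leaves implicit, but it does not change the argument.
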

\begin{proof}
Assume $\rho \mid \Phi_\varphi(x)$. By Lemma \ref{onlyp}, $\varphi$ cannot divide $x-1$ since this would imply $\rho = \varphi$, a contradiction. Indeed $ x^\varphi -1 = (x-1) \Phi_\varphi(x) \equiv 0 \mod{\rho}$ implies $x^\varphi \equiv 1 \mod{\rho}$.
Then, the multiplicative order of $x$ in $(\bb{Z}/\rho\bb{Z})^\times$ divides $\varphi$, implying $\text{ord}(x) \in\{1,\varphi\}$. If $\text{ord}(x) = 1$, then $\rho \mid x-1$, a contradiction. Thus $\text{ord}(x)=\varphi$ and, by Lagrange's theorem, $\varphi \mid \rho-1$, proving the claim. \qedhere

\end{proof}
Note that, by the contrapositive statement, if $\rho \mid x^\varphi-1$, for some prime different from $\varphi$, then we obtain a sufficient condition for $x-1$ divisibility. That is, if $\rho \not\equiv 1 \mod{\varphi}$, then we must have $\rho \mid x-1$. 

\section{The prime case}
We are now in a position to extend the hybrid bound to any prime $n=\varphi$. The effectiveness of the bound fundamentally relies on the following observation.
\begin{Rem} \label{hybidea} 
For any prime divisor $\rho \mid x^\varphi - 1$ outside of $\varphi$ itself, $\rho$ divides either $x-1$ or $\Phi_\varphi(x)$. If $\rho$ divides $\Phi_\varphi(x)$ then $\rho$ is in a sparse subset of the primes, namely in the arithmetic progression $ \rho \equiv 1 \mod{\varphi}$. If most of the prime divisors are of this form, the bound on $x$ is improved since each factor in the product bounding $x^\varphi-1$ is larger. If instead many prime divisors of $x^\varphi-1$ also divide $x-1$, then we can bound $x-1$ by a product of small primes instead of $x^\varphi-1$. This, in turn, improves the bound on $x$. Consider the case where $n=3$ and $\omega_3=6$. The bound \eqref{trivialb} gives $x \geq 32$. Let $k \in \{0,\dots,6\}$ denote the number of primes $\rho \mid x^3-1$ where $\rho \equiv 2 \mod{3}$. If we assume $k=2$, then we can bound $x$ in two ways: 
\begin{align*}
x^3 - 1 \geq & \ (2 \cdot 5 )\cdot (3 \cdot 7 \cdot 13 \cdot 19 ) \\
x - 1 \geq & \ (2 \cdot 5).
\end{align*}
This gives the bound $x\geq (51871)^{1/3}>37$. If instead we take $k=3$, we obtain
\begin{align*}
x^3 - 1 \geq & \ (2 \cdot 5 \cdot 11) \cdot (3 \cdot 7 \cdot 13 ) \\
x - 1 \geq & \ (2 \cdot 5 \cdot 11).
\end{align*} 
This gives the bound $x\geq 110$. If we continue in the same manner over all values of $k$, we find that the bound is minimal when $k=2$. Since, $k$ is unknown for general $x$, we minimise over $k$ and conclude that $x \geq 38$. 
\end{Rem}
In order to formalise Remark \ref{hybidea}, we partition the primes via $\varphi$. Define $ \Pi_\varphi^0 := \{ \rho \in \bb{P} \mid \rho \not\equiv 1 \mod{\varphi} \} $ and $ \Pi_\varphi^1 := \{ \rho \in \bb{P} \mid \rho \equiv 1 \mod{\varphi} \}$. By construction, we must have $ \bb{P} = \Pi_\varphi^0 \sqcup \Pi_\varphi^1 $. Moreover, define the sets $\Omega_\varphi(x) := \{ \rho \in \bb{P} \text{ st } \rho \mid x^\varphi-1 \}$ and $\Omega_\varphi^i(x) :=  \Omega_\varphi(x) \cap \Pi_\varphi^i $ for $i=0 $ or $1$. Under the canonical ordering of the primes, denote the $i^\text{th}$ element of $\Pi_\varphi^0$ and $\Pi_\varphi^1$ by $r_{i,0}$ and $r_{i,1}$, respectively. Using this notation, we have $\omega_\varphi = \abs{\Omega_\varphi(x)}$. Then, we can split the bound for $x^\varphi-1$ into disjoint factors according to $\Omega_\varphi^i(x)$ membership:
$$ x^\varphi -1  \geq \prod_{\rho \in \Omega_\varphi(x)} \rho = \prod_{\rho_0 \in \Omega_\varphi^0(x)} \rho_0 \prod_{\rho_1 \in \Omega_\varphi^1(x)} \rho_1. $$
By Proposition \ref{arithprog}, we have $\rho_0 \mid x-1$ for any $\rho_0 \in \Omega_\varphi^0(x)$. Thus, $x$ must also satisfy $ x - 1 \geq \prod_{\rho_0 \in \Omega_\varphi^0(x)} \rho_0$. Combining these bounds, we obtain
\begin{equation}
x \geq \text{max}\left\{ 1+ \prod_{\rho_0 \in \Omega_\varphi^0(x)} \rho_0,\left( 1 + \prod_{\rho_0 \in \Omega_\varphi^0(x)} \rho_0 \prod_{\rho_1 \in \Omega_\varphi^1(x)} \rho_1 \right)^{1/\varphi} \right\}. \label{boundprime}
\end{equation}  
By considering pointwise bounds on these products, we obtain a general bound. 
\begin{Prop} (\textit{$\varphi$-hybrid bound}) Let $\varphi \in \bb{P}$ and $x \in \bb{Z}_{>1}$. Then \label{phybbound}
$$ x \geq \text{min}_{0 \leq k < \omega_\varphi}\left[ \text{max}\left\{ 
1+\prod\limits_{\substack{i=1 \\ r_{i,0} \in \Pi_\varphi^0}}^k r_{i,0}, \left(1 + \prod\limits_{\substack{i=1 \\ r_{i,0} \in \Pi_\varphi^0}}^k r_{i,0} \prod\limits_{\substack{j=1 \\ r_{j,1} \in \Pi_\varphi^1}}^{\omega_\varphi-k} r_{j,1}\right)^{1/\varphi} \right\} \right]. $$
\end{Prop}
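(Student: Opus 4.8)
The plan is to convert the configuration-dependent estimate \eqref{boundprime} into a bound depending only on $\omega_\varphi$ by a two-stage process: first replace the actual prime divisors of $x^\varphi-1$ by the smallest admissible primes, then minimise over the one genuinely unknown parameter $k := \abs{\Omega_\varphi^0(x)}$, the number of prime factors of $x^\varphi-1$ lying outside the progression $\rho\equiv 1\mod{\varphi}$. Once $k$ is fixed, the identity $\abs{\Omega_\varphi^1(x)}=\omega_\varphi-k$ is forced, so the two products in \eqref{boundprime} run over exactly $k$ and $\omega_\varphi-k$ distinct primes respectively.

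For the pointwise minimisation step, I would argue as follows. The set $\Omega_\varphi^0(x)$ is a set of $k$ distinct elements of $\Pi_\varphi^0$, so its product is at least $\prod_{i=1}^{k} r_{i,0}$, the product of the $k$ smallest members of $\Pi_\varphi^0$; similarly $\prod_{\rho_1\in\Omega_\varphi^1(x)}\rho_1\geq \prod_{j=1}^{\omega_\varphi-k} r_{j,1}$. Substituting these two inequalities into the two arguments of the maximum in \eqref{boundprime}, and using that $x$ majorises each argument separately, yields $x\geq B(k)$, where $B(k)$ denotes the bracketed maximum appearing in the statement. Since $B(k)$ is now an explicit function of $\omega_\varphi$ and of the single integer $k$, and since the true value of $k$ is not known a priori, I would pass to $x\geq \min_k B(k)$, the minimum taken over every value of $k$ that can actually be realised.

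It remains to pin down the admissible range of $k$, and this is where the real content sits. The substitution is valid for every $0\leq k\leq \omega_\varphi$, but the statement asserts the sharper range $0\leq k<\omega_\varphi$; equivalently, I must exclude $\Omega_\varphi^1(x)=\varnothing$, i.e. show that $x^\varphi-1$ always has a prime factor $\rho\equiv 1\mod{\varphi}$. Recall $x^\varphi-1=(x-1)\Phi_\varphi(x)$, and that by Proposition \ref{arithprog} every prime factor of $\Phi_\varphi(x)$ other than $\varphi$ is congruent to $1\mod{\varphi}$. Hence it suffices to show $\Phi_\varphi(x)$ is not a power of $\varphi$. For $x\geq 2$ one has $\Phi_\varphi(x)=1+x+\dots+x^{\varphi-1}>\varphi$, while for odd $\varphi$ a standard lifting-the-exponent computation gives $\varphi^2\nmid\Phi_\varphi(x)$; combined, these produce a prime factor $\rho\neq\varphi$ of $\Phi_\varphi(x)$, which then lies in $\Omega_\varphi^1(x)$. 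Thus the true $k$ satisfies $k\leq\omega_\varphi-1$, and the clean chain $x\geq B(k)\geq\min_{0\leq k<\omega_\varphi}B(k)$ closes the argument.

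The main obstacle is precisely this range argument together with the degenerate prime $\varphi=2$, where $\Pi_2^0=\{2\}$ is finite, the valuation estimate fails, and the single configuration $x=3$ genuinely realises $k=\omega_\varphi$. I would dispose of this lone case by direct computation, verifying that $\min_{0\leq k<\omega_2}B(k)\leq 3$ so that the asserted inequality survives even though the clean chain degenerates there. I would also note that for $\varphi=2$ one automatically has $k\leq 1$, so no product $\prod_{i=1}^{k} r_{i,0}$ with $k\geq 2$ (which would call on non-existent members of $\Pi_2^0$) ever arises, keeping every expression well defined throughout the admissible range.
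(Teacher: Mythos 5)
Your proposal is correct and follows essentially the same route as the paper: substitute the pointwise bounds $\prod_{\rho_0\in\Omega_\varphi^0(x)}\rho_0\geq\prod_{i=1}^{k}r_{i,0}$ and $\prod_{\rho_1\in\Omega_\varphi^1(x)}\rho_1\geq\prod_{j=1}^{\omega_\varphi-k}r_{j,1}$ into \eqref{boundprime} and then minimise over the unknown $k=\abs{\Omega_\varphi^0(x)}$. The one place you go beyond the paper is the range restriction $k<\omega_\varphi$: the paper simply asserts that $\abs{\Omega_\varphi^1(x)}>0$ for any $x$, whereas you actually prove it for odd $\varphi$ (via $\Phi_\varphi(x)>\varphi$ together with $\varphi^2\nmid\Phi_\varphi(x)$ and Proposition \ref{arithprog}) and correctly observe that the assertion fails for $\varphi=2$, $x=3$, where $x^2-1=2^3$ forces $k=\omega_2$; your direct check that the stated minimum still does not exceed $3$ there repairs a small imprecision in the paper's own argument.
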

\begin{proof}
Assume that $k = \abs{\Omega_\varphi^0}$, thus $\abs{\Omega_\varphi^1} = \omega_\varphi-k$. By choosing the smallest possible candidates for $\rho_0 \in \Omega_\varphi^0(x)$ and $\rho_1 \in \Omega_\varphi^1(x)$, we obtain the inequalities $ \prod_{\rho_0 \in \Omega_\varphi^0(x)} \rho_0 \geq \prod_{i=1}^k r_{i,0} $ and $ \prod_{\rho_1 \in \Omega_\varphi^1(x)} \rho_1 \geq \prod_{j=1 }^{\omega_\varphi-k} r_{j,1}$. We apply the pointwise bounds for $\prod_{\Omega_\varphi^0(x)}$ and $\prod_{\Omega_\varphi^1(x)}$ in \eqref{boundprime} at each $k$ via the argument above. Thus, we are left with 
$$ x \geq  \text{max}\left\{ 
1+\prod\limits_{\substack{i=1 \\ r_{i,0} \in \Pi_\varphi^0}}^k r_{i,0}, \left(1 + \prod\limits_{\substack{i=1 \\ r_{i,0} \in \Pi_\varphi^0}}^k r_{i,0} \prod\limits_{\substack{j=1 \\ r_{j,1} \in \Pi_\varphi^1}}^{\omega_\varphi-k} r_{j,1}\right)^{1/\varphi} \right\}. $$
Note that $ \abs{\Omega_\varphi^1}>0$ for any $x$, so $k \neq \omega_\varphi$. Since $k \in [0,\omega_\varphi)$ is unknown, we minimise over $k$ and arrive at the statement. \qedhere
\end{proof}

We are able to improve this further by considering the divisibility of $x^\varphi-1$ by $\varphi$. This follows from Lemmas \ref{onlyp} and \ref{bothornone}. If $\varphi \mid x^\varphi-1$, then $\varphi^2 \mid x^\varphi-1$. Moreover, if $\varphi \nmid x^\varphi-1$, we exclude $\varphi$ from our lower bound, replacing it with a larger prime factor. For ease of notation, define $R^0_k := \prod_{i=1}^k r_{i,0} $ and $ R^1_k := \prod_{j=1 }^{\omega_\varphi-k} r_{j,1}$, with the convention where $r_{i,s} \in \Pi_\varphi^s$ denotes the $i^\text{th}$ element under the natural ordering.

\begin{Prop} (\textit{$\varphi$-ramification bound}) \label{pramb}
Let $\varphi \in \bb{P}$ with $x \in \bb{Z}_{>1}$ and assume, without loss of generality, that $\varphi = r_{k'} \in \Pi_\varphi^0$. Let $I_{k'}^- = [0,k')$ and $ I_{k'}^+= [k',\omega_\varphi)$. Then we have
\begin{align*}
x \geq & \ \text{min}\left[ \left\{ \text{max}\left\{ 
1+R^0_k, \left(1 + R^0_k R^1_k\right)^{1/\varphi} \right\} \ \biggr\vert \ k \in I_{k'}^- \right\}, \right. & \\
 & \left. \left\{ \text{max}\left\{ 
1+\varphi^{-1}R^0_{k+1}, \left(1 + \varphi^{-1}R^0_{k+1}R^1_k\right)^{1/\varphi} \right\} \ \biggr\vert \ k \in I_{k'}^+ \right\} \right] & \ \ \text{for }  \varphi \notin \Omega_p(x) \ \\
x \geq & \ \text{min}\left[ \left\{ \text{max}\left\{ 
1+\varphi R^0_{k-1}, \left(1 + \varphi^2 R^0_{k-1} R^1_k \right)^{1/\varphi} \right\} \ \biggr\vert \ k \in I_{k'}^- \right\}, \right. & \\
&  \left. \left\{ \text{max}\left\{ 
1+R^0_k, \left(1 + \varphi R^0_k R^1_k\right)^{1/\varphi} \right\}  \ \biggr\vert \ k \in I_{k'}^+ \right\} \right] & \ \ \text{for } \varphi \in \Omega_p(x).  
\end{align*} 
\end{Prop}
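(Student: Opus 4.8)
\noindent The plan is to branch on whether $\varphi \mid x^\varphi-1$ and, in each branch, to rerun the derivation of the $\varphi$-hybrid bound (Proposition~\ref{phybbound}) while tracking two extra pieces of information: the \emph{forced} membership or non-membership of $\varphi$ in $\Omega_\varphi^0(x)$, and the additional power of $\varphi$ supplied by Lemma~\ref{bothornone}. As in the proof of Proposition~\ref{phybbound}, I would set $k=\abs{\Omega_\varphi^0(x)}$, so that $\abs{\Omega_\varphi^1(x)}=\omega_\varphi-k$, and invoke Proposition~\ref{arithprog} to conclude that every $\rho_0\in\Omega_\varphi^0(x)$ divides $x-1$; hence $x-1\geq\prod_{\rho_0\in\Omega_\varphi^0(x)}\rho_0$ and $x^\varphi-1\geq\prod_{\rho\in\Omega_\varphi(x)}\rho$. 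The new ingredient is that $\varphi=r_{k',0}$ occupies a \emph{known} position $k'$ in the ordering of $\Pi_\varphi^0$, so the pointwise minimisation of these products must respect whether $\varphi$ is among the $k$ chosen primes.

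Consider first the case $\varphi\in\Omega_\varphi(x)$. By Lemma~\ref{bothornone} we have $\varphi\mid x-1$ and $\varphi\mid\Phi_\varphi(x)$, so $\varphi^2\mid x^\varphi-1$ and therefore $x^\varphi-1\geq\varphi\prod_{\rho\in\Omega_\varphi(x)}\rho$; this extra factor of $\varphi$ is the ramification contribution. Moreover $\varphi\in\Omega_\varphi^0(x)$, so the $k$ chosen elements of $\Pi_\varphi^0$ must include $\varphi=r_{k',0}$. If $k\in I_{k'}^+$, i.e.\ $k\geq k'$, the $k$ smallest elements of $\Pi_\varphi^0$ already contain $\varphi$, so the minimal products are $R^0_k$ and, after the ramification factor, $\varphi R^0_k R^1_k$. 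If instead $k\in I_{k'}^-$, i.e.\ $k<k'$, then $\varphi$ is not among the $k$ smallest elements, so the minimal admissible choice replaces the last slot by $\varphi$, giving $x-1\geq\varphi R^0_{k-1}$ and $x^\varphi-1\geq\varphi^2 R^0_{k-1} R^1_k$. Substituting into the two-sided estimate \eqref{boundprime} and taking maxima returns exactly the two candidate families in the statement.

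The case $\varphi\notin\Omega_\varphi(x)$ is dual: now $\varphi\nmid x^\varphi-1$, so there is no ramification factor and $\varphi$ must be \emph{excluded} from the $k$ primes of $\Omega_\varphi^0(x)$. When $k\in I_{k'}^-$ the $k$ smallest elements of $\Pi_\varphi^0$ all precede $\varphi$ and so avoid it automatically, yielding the products $R^0_k$ and $R^0_k R^1_k$. When $k\in I_{k'}^+$ the naive choice of the $k$ smallest elements would illegitimately include $\varphi=r_{k',0}$; discarding it and promoting the next available prime replaces $R^0_k$ by $\varphi^{-1}R^0_{k+1}$, producing the bounds $1+\varphi^{-1}R^0_{k+1}$ and $\bigl(1+\varphi^{-1}R^0_{k+1}R^1_k\bigr)^{1/\varphi}$. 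In both divisibility cases the true value of $k$ is unknown; since each feasible $k$ yields a genuine lower bound for $x$, minimising over the index range $I_{k'}^{\pm}$ and then over the two ranges preserves the inequality and gives the stated bound.

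I expect the main obstacle to be purely combinatorial bookkeeping rather than any analytic difficulty: one must verify, in each of the four regimes, that the displayed product is genuinely the smallest value of $\prod_{\rho_0\in\Omega_\varphi^0(x)}\rho_0$ compatible with the constraint that $\varphi$ be forced in, respectively forced out, at the prescribed position $k'$, and that the index shifts $R^0_{k-1}\mapsto\varphi R^0_{k-1}$ and $R^0_{k+1}\mapsto\varphi^{-1}R^0_{k+1}$ correctly encode the insertion and deletion of $\varphi$. Some care is also needed at the boundary $k=0$ in the first case, where $\varphi\in\Omega_\varphi^0(x)$ already forces $k\geq 1$ and the empty-product convention for $R^0_{k-1}$ must be read accordingly; since admitting spurious indices can only weaken a minimum, this does not affect the validity of the bound.
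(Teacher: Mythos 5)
Your proposal is correct and follows essentially the same route as the paper's proof: the same case split on $\varphi\in\Omega_\varphi(x)$ versus $\varphi\notin\Omega_\varphi(x)$, the same sub-split on $k<k'$ versus $k\geq k'$, the same use of Lemma~\ref{bothornone} to supply the extra factor of $\varphi$, and the same index shifts $\varphi R^0_{k-1}$ and $\varphi^{-1}R^0_{k+1}$ to encode the forced inclusion or exclusion of $\varphi$ before minimising over $k$. Your added remarks on the boundary case $k=0$ and on verifying minimality of the substituted products are sensible bookkeeping that the paper leaves implicit.
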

\begin{proof}
Consider the general bound obtained in \eqref{boundprime}. We subdivide the proof into cases dependant on $\varphi$-membership of $\Omega_\varphi(x)$. \\ \ \\
\underline{Case 1}: $\varphi \notin \Omega_\varphi(x)$. \\
\bf{1a) } If $k < k'$, then, since $\varphi$ is not a factor of $R^0_k$, we use the hybrid bound 
$$ x \geq \text{max}\left\{ 
1+R^0_k, \left(1 + R^0_k R^1_k\right)^{1/\varphi} \right\}. $$
\bf{1b) } If instead $k' \leq k<\omega_\varphi$, then $\varphi$ appears as a factor of $R^0_k$. Therefore, we can exclude $\varphi$ from the product and instead consider $\varphi^{-1}R^0_{k+1}$, resulting in the bound
$$ x \geq \text{max}\left\{ 
1+\varphi^{-1}R^0_{k+1}, \left(1 + \varphi^{-1}R^0_{k+1}R^1_k\right)^{1/\varphi} \right\}. $$
We minimise over possible values of $k$, obtaining the desired bound. \\ \ \\
\underline{Case 2}: $\varphi \in \Omega_\varphi(x)$. \\
\bf{2a) } If $k < k'$, then all factors of $R^0_k$ will be smaller than $\varphi$, thus we can replace the largest such factor $r_{k,0}$ by $\varphi$. Moreover, by Lemma \ref{bothornone}, if $\varphi \in \Omega_\varphi(x)$, then $\varphi^2 \mid x^\varphi-1$. Thus, we can replace $R^1_k$ by $\varphi R^1_k$. Then
$$ x \geq \text{max}\left\{ 
1+\varphi R^0_{k-1}, \left(1 + \varphi^2 R^0_{k-1} R^1_k \right)^{1/\varphi} \right\}. $$
\bf{2b) } If instead $k' \leq k < \omega_\varphi$, then $\varphi$ already appears as a factor in $R^0_k$ but we still gain an extra $\varphi$ factor in $R^0_k R^1_k$ by the argument in case \bf{2a}. Thus 
$$ x \geq \text{max}\left\{ 
1+R^0_k, \left(1 + \varphi R^0_k R^1_k\right)^{1/\varphi} \right\}. $$
We minimise over possible values of $k$, obtaining the desired bound. \qedhere
\end{proof}
Returning to the example given in Remark \ref{hybidea}, let $k=2$ and $3 \nmid x^3-1$. Then
\begin{align*}
x^3 - 1 \geq & \ (2 \cdot 5 )\cdot ( 7 \cdot 13 \cdot 19 \cdot 31) \\
x - 1 \geq & \ (2 \cdot 5).
\end{align*}
This gives the bound $x\geq (535990)^{1/3}>81$. If instead $3 \mid x^3-1$, we obtain
\begin{align*}
x^3 - 1 \geq & \ 3 \cdot (2 \cdot 5 ) \cdot (3 \cdot 7 \cdot 13 \cdot 19) \\
x - 1 \geq & \ 3 \cdot (2 \cdot 5),
\end{align*}
resulting in the bound $x (155610)^{1/3}>53$. Both divisibility cases provide a significant improvement over the bound in Remark \ref{hybidea}.

\section{The generalised hybrid bound}
We generalise results from the previous sections to the setting where we consider $x^n-1$ with $n$ composite. The first step is to factorise $x^n-1$ in $\bb{Z}[x]$ such that we obtain a natural partition of the prime divisors. 
\begin{Prop}
Let $n = \prod_{i=1}^m \varphi_i^{a_i}$ and define $x_k=\exp\{\log(x) \prod_{r=1}^{k-1} \varphi_r^{a_r}\}$ for any choice of $k \in [1,m]$. Then we have the $\bb{Z}[x]$-factorisation \label{sensiblefact}
$$ x^n-1 = (x-1) \prod_{i=1}^m \left[ \prod_{j=1}^{a_i} \Phi_{\varphi_i}\left( x_i^{\varphi_i^{j-1}} \right)\right]. $$
\end{Prop}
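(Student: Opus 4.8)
The plan is to rewrite each cyclotomic factor as a quotient and to collapse the double product by two nested telescopings. The only ingredient beyond bookkeeping is the elementary identity $\Phi_{\varphi}(z) = (z^{\varphi}-1)/(z-1)$ for a prime $\varphi$, which follows immediately from $\Phi_\varphi(z)=\sum_{i=0}^{\varphi-1}z^i$ as recalled at the start of Section 2.

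First I would fix notation by writing $N_k = \prod_{r=1}^{k}\varphi_r^{a_r}$ for $0 \le k \le m$, with the empty product $N_0 = 1$ and $N_m = n$. Then $x_k = x^{N_{k-1}}$, and the key bookkeeping relation is $x_i^{\varphi_i^{a_i}} = x^{N_{i-1}\varphi_i^{a_i}} = x^{N_i}$. Substituting $z = x_i^{\varphi_i^{j-1}}$ into the identity above gives
$$ \Phi_{\varphi_i}\left(x_i^{\varphi_i^{j-1}}\right) = \frac{x_i^{\varphi_i^{j}}-1}{x_i^{\varphi_i^{j-1}}-1}, $$
so that the inner product over $j$ telescopes to
$$ \prod_{j=1}^{a_i}\Phi_{\varphi_i}\left(x_i^{\varphi_i^{j-1}}\right) = \frac{x_i^{\varphi_i^{a_i}}-1}{x_i-1} = \frac{x^{N_i}-1}{x^{N_{i-1}}-1}. $$

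Next I would telescope the outer product over $i$, since the last expression is a ratio of the consecutive quantities $x^{N_{i-1}}$ and $x^{N_i}$:
$$ \prod_{i=1}^{m}\frac{x^{N_i}-1}{x^{N_{i-1}}-1} = \frac{x^{N_m}-1}{x^{N_0}-1} = \frac{x^{n}-1}{x-1}. $$
Multiplying through by $(x-1)$ recovers the stated identity.

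The one point that needs care — and the only genuine obstacle — is to confirm that this is a factorisation in $\bb{Z}[x]$ and not merely an identity of rational functions, since every step above involves division. For this I would note that each $\Phi_{\varphi_i}(x_i^{\varphi_i^{j-1}})$ is the integer-coefficient polynomial $\Phi_{\varphi_i}$ evaluated at the monomial $x^{N_{i-1}\varphi_i^{j-1}}$, and hence lies in $\bb{Z}[x]$; and that every quotient appearing in the telescoping has the form $(y^{d}-1)/(y^{e}-1)$ with $e \mid d$, which is a genuine polynomial in $y$ with integer coefficients. Thus the computation may be carried out in the fraction field of $\bb{Z}[x]$, after which both sides are seen to be the same element of $\bb{Z}[x]$, establishing the claimed $\bb{Z}[x]$-factorisation.
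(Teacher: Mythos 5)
Your proof is correct and is essentially the paper's argument run in reverse: the paper repeatedly applies $z^{\varphi}-1=(z-1)\Phi_{\varphi}(z)$ to peel cyclotomic factors off $x^{n}-1$ one at a time, which is exactly your two nested telescopings read in the factoring direction (and which sidesteps the integrality point you address at the end, since no division ever occurs). Both rest on the same single identity, so there is nothing further to reconcile.
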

\begin{proof}
Consider the factorisation $n = r \cdot \varphi_m^{a_m}$. Then, we have
 \begin{align*}
x^n-1 = & \ (x^{n/\varphi_m})^{\varphi_m} -1 \\
 = & \ (x^{n/\varphi_m}-1) \Phi_{\varphi_m}\left(x^{r\cdot \varphi_m^{m-1}} \right) \\
 = & \ (x^{n/\varphi^2_m}-1)  \Phi_{\varphi_m}\left(x^{r\cdot \varphi_m^{m-2}}\right) \cdot \Phi_{\varphi_m}\left(x^{r\cdot \varphi_m^{m-1}}\right) \\
  \vdots &  \\
 = & \ (x^r - 1) \Phi_{\varphi_m}(x^r) \cdot ...\cdot \Phi_{\varphi_m}\left(x^{r\cdot \varphi_m^{m-1}}\right) \\
 = & \ (x^r-1) \prod\limits_{j=1}^{a_m}\Phi_{\varphi_m}\left(x^{r\cdot \varphi_m^{j-1}}\right).
\end{align*}
Iterating the factorisation on $x_k-1$ for $k \in \{m,\dots,1\}$, we have
\begin{align*}
x^n-1  = & \ (x_m-1) \prod\limits_{j=1}^{a_m}\Phi_{\varphi_m}\left(x^{r\cdot \varphi_m^{j-1}}\right) \\
= & \ (x_{m-1}-1) \left[ \prod\limits_{j=1}^{a_{m-1}}\Phi_{\varphi_{m-1}}\left(x_{m-1}^{ \varphi_{m-1}^{j-1}}\right) \right] \left[\prod\limits_{j=1}^{a_m}\Phi_{\varphi_{m}}\left(x_{m}^{ \varphi_{m}^{j-1}}\right) \right] \\
= & \ (x-1) \left[ \prod\limits_{j=1}^{a_{1}}\Phi_{\varphi_{1}}\left(x_1^{ \varphi_{1}^{j-1}}\right) \right] \cdot ... \cdot \left[\prod\limits_{j=1}^{a_m}\Phi_{\varphi_{m}}\left(x_{m}^{ \varphi_{m}^{j-1}}\right) \right] \\
= & \ (x-1) \prod_{i=1}^m\left[\prod\limits_{j=1}^{a_i}\Phi_{\varphi_{i}}\left(x_{i}^{ \varphi_{i}^{j-1}}\right) \right] .
\end{align*} \qedhere
\end{proof}
Note that, if we define $x_{m+1}:=x^n$, for any $s \in [1,m]$ we have the relation
$$ x_{s+1} -1 = (x_{s}-1) \prod_{j=1}^{a_s}\Phi_{\varphi_s}\left( x_s^{{\varphi_s}^{j-1}}\right), $$
in the polynomial ring $\bb{Z}[x]$. We proceed by generalising Lemmas \ref{onlyp} and \ref{bothornone}. 
\begin{Lemma}
Let $\rho,\varphi \in \bb{P}$ and $y \in \bb{Z}^+$. Then $ \rho \mid y-1,\Phi_{\varphi}\left(y^{\varphi^{a-1}}\right) $ for some $a \in \bb{Z}^+$ implies we must have $\rho = \varphi$. \label{onlypgen}
\end{Lemma}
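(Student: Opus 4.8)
The plan is to generalise the argument of Lemma \ref{onlyp} by reducing modulo $\rho$. Suppose $\rho \mid y-1$ and $\rho \mid \Phi_\varphi(y^{\varphi^{a-1}})$. The first divisibility gives $y \equiv 1 \mod{\rho}$, so $y^{\varphi^{a-1}} \equiv 1 \mod{\rho}$ as well. Substituting into the second divisibility and using that $\Phi_\varphi$ has integer coefficients, we obtain $0 \equiv \Phi_\varphi(y^{\varphi^{a-1}}) \equiv \Phi_\varphi(1) \equiv \varphi \mod{\rho}$. Hence $\rho \mid \varphi$, and since both are primes, $\rho = \varphi$, as required.

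The key observation making this work is that raising $y$ to the power $\varphi^{a-1}$ does not disturb the congruence class modulo $\rho$ once we know $y \equiv 1$; the only property of the exponent I use is that any power of $1$ is $1$. Thus the exponent $\varphi^{a-1}$ plays no essential role beyond being a positive integer, and the proof is genuinely identical in structure to the base case. I would present it in essentially one line, carefully noting that the reduction $\Phi_\varphi(y^{\varphi^{a-1}}) \equiv \Phi_\varphi(1) \mod{\rho}$ is valid precisely because $\Phi_\varphi \in \bb{Z}[x]$ and congruences are preserved under integer polynomial evaluation.

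There is no real obstacle here: the statement is a direct generalisation of Lemma \ref{onlyp}, and the only subtlety worth flagging is ensuring the reader sees that $y \equiv 1 \mod{\rho}$ propagates cleanly through the inner power $y \mapsto y^{\varphi^{a-1}}$ before the outer evaluation of $\Phi_\varphi$. Once that is made explicit, the recovery of $\Phi_\varphi(1) = \varphi$ and the conclusion $\rho = \varphi$ follow exactly as before.
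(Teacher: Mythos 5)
Your proof is correct and is essentially identical to the paper's: both reduce $y \equiv 1 \bmod \rho$ through the power $y^{\varphi^{a-1}}$, evaluate $\Phi_\varphi(1) = \varphi$, and conclude $\rho \mid \varphi$ hence $\rho = \varphi$. No differences worth noting.
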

\begin{proof}
Assume $ \rho \mid y-1,\Phi_{\varphi}\left(y^{ \varphi^{a-1}}\right) $. Then, $y \equiv 1 \mod{\rho}$, which implies that 
$$ \Phi_\varphi\left(y^{\varphi^{a-1}} \right) \equiv \Phi_\varphi\left(1^{\varphi^{a-1}} \right) \equiv \Phi_\varphi\left(1\right) \equiv \varphi \mod{\rho}. $$
Therefore $ \rho \mid \Phi_{\varphi}\left(y^{ \varphi^{a-1}}\right) $ implies that $\rho \mid \varphi$. Since both $\rho$ and $\varphi$ are primes, we conclude that indeed $\rho = \varphi$. \qedhere
\end{proof}
\begin{Rem}
Consider the situation where $n = r \varphi_1^{a_1}\varphi_2^{a_2}$ for two distinct $\varphi_1,\varphi_2 \in \bb{P}$ where $\varphi_1,\varphi_2 \nmid r$. We have the $\bb{Z}[x]$-factorisation
$$ x^n-1 = (x^r-1) \prod_{i=1}^{a_1} \Phi_{\varphi_1}\left((x^r)^{\varphi_1^{i-1}} \right) \prod_{j=1}^{a_2} \Phi_{\varphi_2}\left((x^{r\varphi_1^{a_1}})^{\varphi_2^{j-1}} \right). $$
Assume there exists some $k_1,k_2$ and some prime $\rho$ such that
$$ \rho \mid \Phi_{\varphi_1} \left((x^r)^{\varphi_1^{k_1-1}} \right),\Phi_{\varphi_2}\left((x^{r\varphi_1^{a_1}})^{\varphi_2^{k_2-1}} \right). $$
If we set $y = x^{r\varphi_1^{a_1}} $, then we have $ \left. \Phi_{\varphi_1} \left((x^r)^{\varphi_1^{k_1-1}} \right) \ \right\vert \ y - 1 $. Thus we obtain $ \rho \mid y-1,\Phi_{\varphi_2}\left(y^{\varphi_2^{k_2-1}}\right) $ which, by Lemma \ref{onlypgen}, implies $ \rho = \varphi_2$. This shows that the only prime divisors potentially dividing cyclotomic polynomials with different bases in the $\bb{Z}[x]$-factorisation from Proposition \ref{sensiblefact} are the bases themselves.
\end{Rem}
\begin{Lemma}
Let $\varphi \in \bb{P}$ and $n = r \cdot \varphi^a $ with $(r,\varphi) = 1$. Then
$$ \varphi \mid x^n - 1 \quad \text{ if and only if } \quad \varphi \mid x^r-1,\Phi_{\varphi}\left(x^{r \cdot \varphi^{i-1}}\right) 
 \quad , \ i \in \{1,...,a\}.$$   

\end{Lemma}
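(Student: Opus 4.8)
The plan is to mirror the proof of Lemma \ref{bothornone}, now applied to the single-prime factorisation established inside the proof of Proposition \ref{sensiblefact}. Specialising that factorisation to $n = r \cdot \varphi^a$ with $(r,\varphi)=1$ gives
$$ x^n - 1 = (x^r - 1)\prod_{i=1}^a \Phi_\varphi\left(x^{r \cdot \varphi^{i-1}}\right), $$
so the reverse implication is immediate: if $\varphi$ divides every factor on the right (indeed, if it merely divides $x^r-1$), then $\varphi \mid x^n - 1$. Thus the content lies entirely in the forward direction.

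For the forward implication, I would assume $\varphi \mid x^n - 1$ and first reduce the exponent using Fermat's little theorem. Since $x^\varphi \equiv x \mod{\varphi}$, iterating yields $x^{\varphi^k} \equiv x \mod{\varphi}$ for every $k \geq 0$, which I would justify by a one-line induction on $k$ (writing $x^{\varphi^{k+1}} = (x^{\varphi^k})^\varphi \equiv x^{\varphi^k} \equiv x$). Setting $y = x^r$ and using $n = r\varphi^a$, this gives $x^n = y^{\varphi^a} \equiv y = x^r \mod{\varphi}$. Hence $\varphi \mid x^n - 1$ forces $x^r \equiv 1 \mod{\varphi}$, i.e.\ $\varphi \mid x^r - 1$, which is the first of the required divisibilities.

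It remains to treat the cyclotomic factors. Having established $x^r \equiv 1 \mod{\varphi}$, raising to the power $\varphi^{i-1}$ gives $x^{r \cdot \varphi^{i-1}} = (x^r)^{\varphi^{i-1}} \equiv 1 \mod{\varphi}$ for each $i \in \{1,\dots,a\}$. Since $\Phi_\varphi(t) = \sum_{s=0}^{\varphi-1} t^s$ satisfies $\Phi_\varphi(1) = \varphi$, substituting and reducing modulo $\varphi$ yields $\Phi_\varphi\left(x^{r \cdot \varphi^{i-1}}\right) \equiv \Phi_\varphi(1) = \varphi \equiv 0 \mod{\varphi}$. Thus $\varphi$ divides each cyclotomic factor, completing the forward direction.

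The argument is elementary, and I do not anticipate a genuine obstacle. The only point requiring care is the iterated use of Fermat's little theorem $x^{\varphi^k} \equiv x \mod{\varphi}$ together with correct bookkeeping of the exponent $n = r\varphi^a$, so that one concludes $x^n \equiv x^r$ rather than $x^n \equiv x$. I would note that the coprimality hypothesis $(r,\varphi)=1$ plays no role in the congruences themselves; it serves only to guarantee that the displayed factorisation is exactly the clean single-prime specialisation of Proposition \ref{sensiblefact}, with $\varphi$ occurring to the full power $a$.
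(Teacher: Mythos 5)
Your proof is correct and follows essentially the same route as the paper: apply Fermat's little theorem (iterated) to deduce $(x^r)^{\varphi^a} \equiv x^r \bmod \varphi$, hence $\varphi \mid x^r-1$, then substitute $x^{r\varphi^{i-1}} \equiv 1$ into $\Phi_\varphi$ to get $\Phi_\varphi(1) = \varphi \equiv 0 \bmod \varphi$, with the reverse implication read off from the factorisation in Proposition \ref{sensiblefact}. Your write-up is somewhat more explicit about the induction behind $x^{\varphi^k} \equiv x \bmod \varphi$, but there is no substantive difference.
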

\begin{proof}
Note that, by definition, we have
$$ \varphi \mid x^n-1 \quad \text{ if and only if } \quad x^n-1 = (x^r)^{\varphi^a} - 1\equiv 0 \mod{\varphi}. $$
By Fermat's little theorem, we observe that $ 0 \equiv (x^r)^\varphi-(x^r) \equiv (x^r)^{\varphi^a} - x^r \mod{\varphi}$, implying $x^r \equiv (x^r)^{\varphi^a} \equiv 1 \mod{\varphi}$. We conclude that $\varphi \mid x^r-1$. Moreover, if $ x^r \equiv 1 \mod{\varphi} $, then, for any fixed $i \in \{1,...,a\}$, we must have $\Phi_{\varphi}\left(x^{r \cdot \varphi^{i-1}}\right) \equiv \Phi_p\left(1^{\varphi^{i-1}}\right) \equiv \varphi \mod{\varphi}$. Since $\varphi \mid \varphi$, the $\varphi$-divisibility of $\Phi_{\varphi}\left(x^{r \cdot \varphi^{i-1}}\right)$ is clear. The reverse implication follows directly from Proposition \ref{sensiblefact}. \qedhere
\end{proof}
Reminiscent of the $n \in \bb{P}$ case, we partition the set of primes. For a given $n = \prod\limits_{i=1}^m \varphi_i^{a_i}$ and any $j \in [1,m-1]$, define the sets
\begin{flalign*}
\Pi_n^m := & \{ \rho \in \bb{P} \mid \rho \equiv 1 \mod{\varphi_m} \} \\
\Pi_n^j := & \{ \rho \in \bb{P} \mid \rho \equiv 1 \mod{\varphi_j} \wedge \rho \not\equiv 1 \mod{\varphi_m,...,\varphi_{j+1}}\} \\ 
\Pi_n^0 := & \{ \rho \in \bb{P} \mid \rho \not\equiv 1 \mod{\varphi_m,...,\varphi_1}\}.
\end{flalign*}
By construction, $ \bb{P} = \bigsqcup_{j=0}^m \Pi_n^j $. Moreover, for $j \in [0,m]$, define the sets
$ \Omega_n(x) := \{ \rho \in \bb{P} \text{ st } \rho \mid x^n-1 \} $ and $\Omega_n^j(x) := \Omega_n(x) \cap \Pi_n^j$. Under the canonical ordering of the primes, denote the $i^\text{th}$ element of $\Pi_n^j$ by $r_{i,j}$. Using this notation, we have $\omega_n = \abs{\Omega_n(x)}$. Then we have the inequality
$$ x^n -1  \geq \prod_{\rho \in \Omega_{n}(x)} \rho = \prod_{\rho_0 \in \Omega_n^0(x)} \rho_0 \cdots \prod_{\rho_m \in \Omega_n^m(x)} \rho_m. $$
Recall the factorisation $ x^n-1 = (x_m-1) \prod_{j=1}^{a_m}\Phi_{\varphi_m}\left(x_m^{j-1}\right)$ in $\bb{Z}[x]$. If $\rho \mid x^n-1$ and $\rho \not\equiv 1 \mod{p_m}$, we must have $\rho \mid x_m-1$. Thus $\rho \in \Omega_n^j(x)$ for some $j \in [0,m-1]$. Iterating this process, we obtain the collection of bounds
\begin{align*}
x^n-1 \geq & \ \prod\limits_{\rho_0 \in \Omega_n^0(x)} \rho_0 \cdots \prod\limits_{\rho_m \in \Omega_n^m(x)} \rho_m  \\
x_m - 1 \geq & \ \prod\limits_{\rho_0 \in \Omega_n^0(x)} \rho_0 \cdots \prod\limits_{\rho_{m-1} \in \Omega_n^{m-1}(x)} \rho_{m-1}  \\
\vdots &  \\
x-1 \geq & \ \prod\limits_{\rho_0 \in \Omega_n^0(x)} \rho_0.
\end{align*} 
Combining these, we have the general bound
\begin{equation}
 x \geq \text{max}\left\{ 1 +\prod\limits_{\rho_0 \in \Omega_n^0(x)} \rho_0, ...,\left(1 + \prod\limits_{\rho_0 \in \Omega_n^0(x)} \rho_0 \cdot ... \cdot \prod\limits_{\rho_m \in \Omega_n^m(x)} \rho_m \right)^{1/n} \right\}. \label{genhyb}\end{equation}
We are now in a position to prove Theorem \ref{hybridb} as a generalisation of Proposition \ref{phybbound} for composite values of $n$.
\begin{proof}[Proof of Theorem \ref{hybridb}]
Set $k_s = \abs{\Omega_n^s(x)} \in \bb{Z}^+$ for all $s \in \{0,...,m\}$. By construction, we have $\sum_{s=0}^m k_s= \omega_n$. Moreover, by choosing the smallest possible candidates for $r_j \in \Omega_{n}^j(x)$, we obtain the inequalities $\prod_{r_j \in \Omega_n^j(x)} r_j \geq \prod_{i = 1}^{k_j} r_{i,j} $ for all $j \in \{0,...,m\}$. Applying the pointwise inequalities to the bound \eqref{genhyb}, we obtain
$$ x \geq \left. \text{max}\left\{\left(1+ \prod_{j=0}^t\prod_{\substack{ i = 1 \\ r_{i,j} \in \Pi^j_{n} }}^{k_j} r_{i,j}\right)^{T(t)} \ \right\vert \ t \in [0,m] \right\}.  $$
A priori, the distribution of $k_s$ is arbitrary, so we minimise over all valid values of the tuple $(k_0,...,k_m) \in \bb{Z}^m$, obtaining the desired result. \qedhere
\end{proof}
Note that this form of the hybrid bound is the correct generalisation in the sense that it coincides with the prime bound in the special case when $n=\varphi$. Indeed
\begin{align*} x \geq & \ \left. \left. \text{min}\left[\text{max}\left\{\left(1+ \prod\limits_{j=0}^t\prod\limits_{\substack{ i = 1 \\ r_{i,j} \in \Pi^j_{n} }}^{k_j} r_{i,j}\right)^{T(t)} \ \right\vert \ t \in [0,m]\right\} \ \right\vert \ \sum\limits_{s=0}^m k_s= \omega_n \right]  \\
 = & \ \left. \left. \text{min}\left[\text{max}\left\{\left(1+ \prod\limits_{j=0}^t\prod\limits_{\substack{ i = 1 \\ r_{i,j} \in \Pi^j_{\varphi} }}^{k_j} r_{i,j}\right)^{T(t)} \ \right\vert \ t \in [0,1]\right\} \ \right\vert \ k_0+k_1= \omega_n \right] \\
= & \ \ \ \text{min}_{k_0+k_1= \omega_n}\left[\text{max}\left\{1+ \prod\limits_{\substack{ i = 1 \\ r_{i,0} \in \Pi_\varphi^0 }}^{k_0} r_{i,0}, \left(1+ \prod\limits_{\substack{ i = 1 \\ r_{i,0} \in \Pi_\varphi^0 }}^{k_0} r_{i,0} \prod\limits_{\substack{ i = 1 \\ r_{i,1} \in \Pi_\varphi^1}}^{k_1} r_{i,1}\right)^{1/\varphi} \right\} \right] \\
= & \ \ \ \text{min}_{k_0+k_1= \omega_\varphi}\left[\text{max}\left\{1+ R^0_{k_0}, \left(1+ R^0_{k_0}R^1_{k_0} \right)^{1/\varphi} \right\} \right]. \end{align*}
This is a reformulation of the hybrid bound given in Proposition \ref{phybbound}.

\section{The generalised ramification bound}
In this section, we prove a generalisation of the ramification bound for composite $n$. This refinement will be dependant on the divisibility $\varphi \mid x^n-1$ for a given prime factor $\varphi \mid n$. As before, let $n = \sum_{i=1}^m \varphi_i^{a_i}$ where the factors $\varphi_i$ are ordered via $i< j $ implying $ \varphi_i < \varphi_j$ and $x \in \bb{Z}^+$. Choose a prime divisor $\varphi_\alpha \mid n$ and assume $\varphi_\alpha \in \Pi_n^\tau$. It is clear that $\tau < \alpha $ since $1 < \varphi_\alpha \leq \varphi_{r}$ implies $\varphi_\alpha \not\equiv 1 \mod{\varphi_r}$, which holds for any $r \geq \alpha$. 
\begin{Rem}
If $\varphi_\alpha \mid x^n-1$, then \label{palpha} $\varphi_\alpha \mid x_{\tau+1}-1 $  and $ \varphi_\alpha \mid \Phi_{\varphi_\alpha}\left(x_\alpha^{ \varphi_\alpha^{j-1}}\right) $ for any $ j \in [1,a_\alpha] $. Thus, $\varphi_\alpha$ must appear in the product $\prod\limits_{\rho_\tau \in \Omega_n^\tau(x)} \rho_\tau$. Moreover, when the product $\prod\limits_{\rho_\alpha \in \Omega_n^\alpha(x)} \rho_\alpha$ appears in our bound, we can replace this with $\varphi_\alpha^{a_\alpha}\prod\limits_{\rho_\alpha \in \Omega_n^\alpha(x)} \rho_\alpha$. The above argument gives the correct generalisation of the ramification bound in Proposition \ref{pramb}. We are thus in a position to prove Theorem \ref{ramifb}.
\end{Rem}
\begin{proof}[Proof of Theorem \ref{ramifb}]
Assume $\varphi_\alpha \mid x^n-1$. Then, by the argument in Remark \ref{palpha}, $\varphi_\alpha$ must appear in the product $\prod_{\rho_\tau \in \Omega_n^\tau(x)} \rho_\tau$. If $k_r \geq b$, then $\varphi_\alpha$ is already a factor of the product. If instead $k_r< b$, then we can replace the largest factor in the product by $\varphi_\alpha$. Moreover, $\prod_{\rho_\alpha \in \Omega_n^\alpha(x)} \rho_\alpha$ can be multiplied with $\varphi_\alpha^{a_\alpha}$ since $\varphi_\alpha \mid x^n-1$ implies $\varphi_\alpha \mid \Phi_{\varphi_\alpha}\left(x_\alpha^{ \varphi_\alpha^{j-1}}\right)$ for any $j \in [1,a_\alpha]$. If instead we assume $\varphi_\alpha \nmid x^n-1$, then we can exclude $\varphi_\alpha$ from the product $\prod_{\rho_\tau \in \Omega_n^\tau(x)} \rho_\tau$. Thus, whenever $k_\tau > b$, we can replace $\varphi_\alpha$ with the smallest prime $r_{k_\tau+1,\tau}$ in $\Pi_n^\tau$ which is not already a factor. In all other cases, we use the general hybrid bounds from Theorem \ref{hybridb}. \qedhere
\end{proof}
\begin{Rem} 
Whenever $n = 2^a$, Theorem \ref{hybridb} does not provide an improvement over \eqref{trivialb}. In essence the hybrid bound relies on maximising over bounds on $x^b-1$ for $b \mid n$. Unfortunately, we have $ \Pi_n^0 =  \{\rho \in \bb{P} \mid \rho \not\equiv 1 \mod{2} \} = \{2\} $. Thus, $k_0 \in \{0,1\}$ and we obtain
 \begin{align*} x \geq & \  \left. \text{min}\left[\text{max}\left\{1+\prod\limits_{i=1}^{k_0} 2,\left(1+ \prod\limits_{i=1}^{k_0} 2 \prod\limits_{\substack{ i = 1 \\ r_{i,1} \in \Pi^1_{n} }}^{\omega_n-k_0} r_{i,1}\right)^{1/n} \right\} \ \right\vert \ k_0 \in \{0,1\} \right]  \\
 = & \ \left. \text{min}\left[\text{max}\left\{1+2^{k_0},\left(1+ 2^{k_0} \prod\limits_{ i = 2}^{1+\omega_n-k_0} s_i \right)^{1/n} \right\} \ \right\vert \ k_0 \in \{0,1\} \right] \\
 = & \ \  \text{min}\left[\left(1+ \prod\limits_{ i = 2}^{1+\omega_n} s_i \right)^{1/n},\left(1+ \prod\limits_{ i = 1}^{\omega_n} s_i \right)^{1/n} \right] \\
 = & \ \ \left(1+ \prod\limits_{ i = 1}^{\omega_n} s_i \right)^{1/n}, \end{align*}
which is the trivial bound \eqref{trivialb}. Here $s_i$ denotes the $i^\text{th}$ prime. Instead we apply Theorem \ref{ramifb} and obtain an improvement: Assume first that $2 \nmid x^n-1$. Then we exclude $2$ as a potential factor in the lower bound for $x^n-1$. If instead $2 \mid x^n-1$, then $2^{a+1} \mid x^n-1$. We obtain the bounds
\begin{align*}
x \geq & \ \left(1+ n \cdot \prod\limits_{i=1}^{\omega_n} s_i \right)^{1/n} \quad \text{ for } 2 \mid x^n-1   \\
x \geq & \ \left(1+ \prod\limits_{i=2}^{\omega_n+1} s_i \right)^{1/n}  \quad \ \ \ \text{ for } 2 \nmid x^n-1. 
\end{align*}
Regardless of the divisibility case, we gain an improvement over the trivial bound. We have proved the following Corollary. 
\end{Rem}
\begin{Cor} \label{mini1.3}
Let $n = 2^a$ and $x \in \bb{Z}^+$. Then
$$ x \geq \text{min}\left[\left(1+ n \cdot \prod\limits_{i=1}^{\omega_n} s_i \right)^{1/n},\left(1+ \prod\limits_{i=2}^{\omega_n+1} s_i \right)^{1/n}\right]. $$
\end{Cor}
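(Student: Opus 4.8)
The plan is to specialise the ramification bound of Theorem~\ref{ramifb} to $n = 2^a$, taking the ramified prime to be $\varphi_\alpha = 2$; I would then give the resulting argument directly, as sketched in the preceding Remark. The first step is to record the structural collapse that occurs when $n = 2^a$: here $m = 1$ and $\varphi_1 = 2$, so the partition degenerates into $\Pi_n^0 = \{\rho \in \bb{P} \mid \rho \not\equiv 1 \mod{2}\} = \{2\}$ and $\Pi_n^1 = \{\rho \in \bb{P} \mid \rho \equiv 1 \mod{2}\}$, the odd primes. The only freedom in the tuple $(k_0, k_1)$ is then $k_0 \in \{0,1\}$, according to whether or not $2$ is counted among the prime divisors of $x^n - 1$. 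The entire improvement over \eqref{trivialb} must therefore come from the ramification correction $\delta_2$, since without it this setup merely recovers the trivial bound.

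The second step is the two-case analysis for $\varphi_\alpha = 2$. In the case $2 \nmid x^n - 1$ (equivalently, $x$ even), every prime divisor of $x^n - 1$ lies in $\Pi_n^1$ and is hence odd; bounding the $\omega_n$ of them below by the $\omega_n$ smallest odd primes $s_2, \dots, s_{\omega_n+1}$ and extracting an $n$-th root yields the term $\left(1 + \prod_{i=2}^{\omega_n+1} s_i\right)^{1/n}$. In the case $2 \mid x^n - 1$ (equivalently, $x$ odd), I would invoke the composite generalisation of Lemma~\ref{bothornone} (with $r = 1$, $\varphi = 2$): it gives $2 \mid x - 1$ together with $2 \mid \Phi_2\!\left(x^{2^{i-1}}\right) = x^{2^{i-1}} + 1$ for each $i \in \{1, \dots, a\}$. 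By Proposition~\ref{sensiblefact} these are the $a+1$ distinct integer factors of $x^n - 1$, each even for odd $x$, so their product is divisible by $2^{a+1}$. Bounding the remaining $\omega_n - 1$ odd prime divisors below by $s_2, \dots, s_{\omega_n}$ then gives $x^n - 1 \geq 2^{a+1} s_2 \cdots s_{\omega_n} = n \prod_{i=1}^{\omega_n} s_i$, hence the term $\left(1 + n \prod_{i=1}^{\omega_n} s_i\right)^{1/n}$.

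Since $x$ is fixed but the divisibility case is a priori unknown, the final step is simply to take the minimum of the two bounds. The main obstacle is the bookkeeping in the odd case: one must confirm that the ramification of $2$ genuinely forces $a+1$ independent powers of $2$ across the separate cyclotomic factors (this is exactly what the $\bb{Z}[x]$-factorisation of Proposition~\ref{sensiblefact} guarantees, avoiding any double-counting), and then reconcile this with the single factor of $2$ already present as $s_1$ in $\prod_{i=1}^{\omega_n} s_i$, which is precisely what produces the multiplier $n = 2^a$ rather than $2^{a+1}$. I note that the sharper constant $2n$ of Theorem~\ref{modramb} would require the further elementary fact that $(x-1)(x+1)$ is divisible by $8$ for odd $x$, but this refinement is not needed for the present corollary.
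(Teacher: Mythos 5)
Your proof is correct and follows essentially the same route as the paper: the paper's preceding Remark likewise splits on whether $2 \mid x^n-1$, excludes $2$ from the product in the even-$x$ case, and in the odd-$x$ case extracts $2^{a+1} \mid x^n-1$ from the $a+1$ even factors of the $\bb{Z}[x]$-factorisation (equivalently the $\delta_\alpha$ correction in Theorem~\ref{ramifb}), then minimises over the two cases. Your write-up is in fact slightly more explicit than the paper's about where $2^{a+1}$ comes from and how it reconciles with $s_1=2$ to produce the multiplier $n$.
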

Note that, in the case when $2 \mid x^n-1$, we can do better. For example, when $n=4$, the proposition above relies on $ 2 \mid x^4-1 $ if and only if $2^3 \mid x^4-1 $ but, as shown by Cohen \cite[Cor. 2.5]{Cohen}, we can prove the stronger statement: $ 2 \mid x^4-1$ if and only if $2^4 \mid x^4-1 $. This follows from the factorisation $x^4-1 = (x-1)(x+1)(x^2+1)$ in $\bb{Z}[x]$ and that $2 \mid x^4-1 $ implies $ 2 \mid x-1$. Then $2 \mid x-1,x+1,x^2+1$ but also $ x+1 \equiv 2 + (x-1) \equiv 0,2 \mod{4}$. Thus $4 \mid x-1$ or $4 \mid x+1$, proving Cohen's result. This principle extends to $n=2^a$ for any $a\in \bb{Z}^+$, resulting in Theorem \ref{modramb}.
\begin{proof}[Proof of Theorem \ref{modramb}]
Consider the $\bb{Z}[x]$-factorisation 
$$ x^n - 1 = (x-1)(x+1) \prod_{i=2}^{a} \left( x^{2^{i-1}}+1 \right).$$
If $2 \mid x$, then we apply the ramification bound as in Proposition 5.2. If instead $2 \nmid x$, then $x \equiv 1 \mod{2}$. Moreover, for any $k \in \bb{Z}^+$, $x^k \equiv 1 \mod{2}$. As before, we argue that $ x+1 \equiv 2+ (x-1) \equiv 0,2 \mod{4}$. Therefore $4 \mid x-1$ or $4 \mid x+1$. Thus, each of the $a+1$ factors of $x^n-1$ are divisible by $2$ and at least one of them is divisible by $4$. Then $2^{a+2} \mid x^n-1$, and the claim follows from the same argument as in Corollary \ref{mini1.3}. \qedhere
\end{proof}
\section{Applications}
In order to demonstrate the utility of these results, we consider two separate applications to existence problems in $\fin{}$ extensions. In what follows, the \textit{hybrid bound} will be taken to mean the bound given in Theorem \ref{hybridb} unless $n = 2^a$, where the \textit{hybrid bound} will instead refer to the specialised version in Theorem \ref{modramb}. In the interest of presenting a cleaner argument and reducing computational complexity, we limit the discussion to the hybrid bounds. Note that the majority of results throughout the chapter can be improved further by applying Theorem \ref{ramifb} instead.
\subsection{The line problem}
Let $q \in \bb{Z}^+$ be a prime power and denote by $\fin{}$ the finite field of order $q$. Consider the following problem. Assume $\gamma_1,\gamma_2 \in \fin{n}$, $\gamma_1,\gamma_2 \neq 0$, such that $\gamma_1/\gamma_2$ generates $\fin{n}$. Does there exist some $a \in \fin{}$ for which $a\gamma_2+\gamma_1$ is primitive in $\fin{n}$? Fix $n \in \bb{Z}^+$ and define
\begin{table}[!ht]
 \begin{tabular}{c c} $\mathcal{L}_n =$ & 
$ \left\{  \begin{tabular}{c|c} $q:=p^m$ & \begin{tabular}{c} $\text{ for all } \gamma_1,\gamma_2 \in \fin{n} \setminus \{0\} \text{ such that } \frac{\gamma_1}{\gamma_2} \text{ generates } \fin{n}, $  \\
   $ \text{ there exists } a \in \fin{} \text{ such that  } a \cdot \gamma_2+\gamma_1 \text{ primitive} $ \end{tabular} \end{tabular} \right\} $.
  \end{tabular}
\end{table} \\
The \textit{line problem} of degree $n$ is to determine which $q \in \mathcal{L}_n$. This problem has been completely solved in the cases where $n=2$ by Cohen \cite[Th. 1.1]{Cohen} and $n=3$ by Bailey, Cohen, Sutherland and Trudgian \cite[Th. 2]{BCST}. Bailey et al. also made significant progress towards the $n=4$ case. For fixed $n$, the standard approach is to take advantage of a sieve to guarantee that all sufficiently large $q$ are members of $\mathcal{L}_n$. In particular, Cohen \cite[Prop. 4.3]{Cohen} proved the following prime sieve criterion.
\begin{Prop}
Let $q$ be a prime power. For any $s$ distinct prime divisors $\varphi_i$ of $q^n-1$, define \label{primesieve} $ \delta = \delta(\varphi_1,...,\varphi_s) := 1 - \sum_{i=1}^s \frac{1}{\varphi_i}$. Suppose that $\delta>0$ for some choice of $\varphi_i$, then
$$ q > R_G:= (n-1)^2 2^{2(\omega_n-s)}\left(\frac{s-1}{\delta}+2\right)^2 \quad \text{ implies } \quad q \in \mathcal{L}_n. $$
\end{Prop}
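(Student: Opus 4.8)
The plan is to run the standard character-sum plus sieve argument for primitivity problems in finite fields. First I would introduce the characteristic function for $e$-free elements: for squarefree $e \mid q^n-1$ set
$$ \rho_e(\alpha) = \theta(e)\sum_{d \mid e}\frac{\mu(d)}{\phi(d)}\sum_{\text{ord}(\chi)=d}\chi(\alpha), $$
where $\theta(e)=\phi(e)/e$, $\mu$ is the M\"obius function and $\chi$ runs over the multiplicative characters of $\fin{n}$. An element of $\fin{n}$ is primitive precisely when it is $\text{rad}(q^n-1)$-free, so writing $N(m)$ for the number of $a \in \fin{}$ for which $a\gamma_2+\gamma_1$ is $m$-free, the claim $q \in \mathcal{L}_n$ reduces to proving $N(\text{rad}(q^n-1))>0$ for every admissible pair $\gamma_1,\gamma_2$.

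The essential analytic input is a Weil-type estimate: for every non-trivial multiplicative character $\chi$ of $\fin{n}$ and every $\gamma_2 \neq 0$,
$$ \left| \sum_{a \in \fin{}} \chi(a\gamma_2+\gamma_1) \right| \leq (n-1)\,q^{1/2}. $$
This is the origin of the factor $(n-1)$ in $R_G$, and establishing it uniformly is the step I expect to be the main obstacle: the sum runs over the \emph{subfield} $\fin{}$ rather than all of $\fin{n}$, so one must view the affine line $\{a\gamma_2+\gamma_1\}$ inside $\fin{n}$ and control the associated (superelliptic) sum through a genus bound, with the hypothesis that $\gamma_1/\gamma_2$ generates $\fin{n}$ furnishing the required non-degeneracy.

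Next I would set up the prime sieve. Let $\varphi_1,\dots,\varphi_s$ be the chosen primes and let $e$ be the radical of the product of the remaining $\omega_n-s$ prime divisors of $q^n-1$, so that the Bonferroni-type sieve inequality gives
$$ N(\text{rad}(q^n-1)) \geq \sum_{i=1}^s N(\varphi_i e) - (s-1)N(e). $$
Expanding each $N(\varphi_i e)$ through the sums $S_d=\sum_{\text{ord}(\chi)=d}\sum_{a}\chi(a\gamma_2+\gamma_1)$ and using $\theta(\varphi_i e)=(1-1/\varphi_i)\theta(e)$, the trivial characters contribute exactly the main term $\theta(e)\,\delta\,q$ (up to a harmless $O(1)$ correction from the possible zero of the linear form); this is where $\delta=1-\sum 1/\varphi_i>0$ enters and forces the main term to be positive. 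Every remaining character is then bounded by the Weil estimate together with the count of $2^{\omega_n-s}$ squarefree divisors of $e$.

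Finally I would collect the error. A direct computation, combining the differences $N(e)-N(\varphi_i e)$ so that their main terms cancel \emph{before} estimating, bounds the total error by $\theta(e)(n-1)q^{1/2}\,2^{\omega_n-s}(s-1+2\delta)$. Hence $N(\text{rad}(q^n-1))>0$ as soon as $\delta q > (n-1)q^{1/2}\,2^{\omega_n-s}(s-1+2\delta)$; dividing by $\delta q^{1/2}$ and using $\tfrac{s-1+2\delta}{\delta}=\tfrac{s-1}{\delta}+2$ yields $q^{1/2} > (n-1)2^{\omega_n-s}\big(\tfrac{s-1}{\delta}+2\big)$, which upon squaring is exactly $q>R_G$. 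The factors $2^{2(\omega_n-s)}$ and $\big(\tfrac{s-1}{\delta}+2\big)^2$ thus arise from squaring the divisor count and the sieve combinatorics. The one delicate point in the bookkeeping is that cancelling the main terms first is what produces the sharp constant $\tfrac{s-1}{\delta}+2$; a cruder term-by-term estimate of $\sum_i\big(N(e)-N(\varphi_i e)\big)$ loses it to the weaker $\tfrac{s}{\delta}+1$.
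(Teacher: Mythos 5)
The paper does not prove this proposition at all --- it is quoted verbatim from Cohen \cite[Prop. 4.3]{Cohen} --- so there is no in-paper argument to compare against, only the cited source. Your sketch is a correct reconstruction of that standard proof: Vinogradov's characteristic function for $e$-free elements, Katz's estimate $\left|\sum_{a\in\mathbb{F}_q}\chi(a\gamma_2+\gamma_1)\right|\le(n-1)q^{1/2}$ (valid precisely because $\gamma_1/\gamma_2$ generates $\mathbb{F}_{q^n}$, and rightly flagged as the one external input rather than something to rederive), and the Cohen--Huczynska prime sieve in which the main terms of $N(\varphi_i e)-(1-1/\varphi_i)N(e)$ are cancelled before estimation, using $\sum_i(1-1/\varphi_i)=s-1+\delta$, which is exactly what yields the constant $\frac{s-1}{\delta}+2$ and the factor $2^{2(\omega_n-s)}$ after squaring.
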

Thus, we aim to choose $\varphi_i$ such that $R_G$ is minimal. For any fixed $s$, this amounts to maximising $\delta$. Thus, we wish for $\sum_{i=1}^s \varphi_i^{-1}$ to be small. In practice, we choose the largest $s$ distinct prime factors $\varphi_i$ of $q^n-1$. When the explicit factorisation of $q^n-1$ is unknown, we bound $R_G$ above by taking $\varphi_i$ to be the $(\omega_n+1-s)$-th through $\omega_n$-th smallest primes. Minimising across valid $s$ for which $\delta>0$, we obtain an upper bound for $R_G$.
\begin{Rem} \label{sufflarge}
Note that, if we choose $s=0$, then trivially $\delta=1>0$. Thus $ q > (n-1)^2 4^{\omega_n} $ implies $q \in \mathcal{L}_n $. For a fixed $n$, consider the trivial lower bound on $q$ via \eqref{trivialb}. If $ q^n-1 \geq \prod_{i=1}^{\omega_n} s_i$, then $ q > \prod_{i=1}^{\omega_n} s_i > (n-1)^2 4^{\omega_n} $ for all sufficiently large $\omega_n$. In that case, we obtain $q \in \mathcal{L}_n$ automatically. 
\end{Rem}
The difficulty then lies in finding a small upper bound on $\omega_n$ for which $q \not\in \mathcal{L}_n$ is a possibility. Call the largest value of $\omega_n$ for which it is possible that there exists some $q \notin \mathcal{L}_n$ the $\omega_n$ \textit{cut-off} detected by a given lower bound on $q$. Varying across $n \in \{2,30\}$, we compare the effectiveness of the hybrid bound to the trivial bound in detecting $\omega_n$ cut-offs and present these calculations in Table \ref{trivhyb}. 
\begin{table}[ht!]
\scriptsize
\centering 
\caption{The trivial and hybrid bounds compared across varying $n$.}
\label{trivhyb}
\begin{tabular}{|c|c||c|c||c|c|} 
\hline 
 n &\multicolumn{2}{|c|}{$\omega_n \leq \#$}  &  \multicolumn{2}{|c|}{$q \leq \#$} &  search space \\
\hline
& Trivial & Hybrid & Trivial & Hybrid & \\
\rowcolor{lightgray} 2 & 8 & 7 & $3712$  & $2040$ & 55\% \\
\rowcolor{lightgray} 3 & 13 & 13 & $142862$  &  $142862$ &  100\% \\
4 & 18 & 18 & $ 1.2\cdot  10^{6} $ & $ 1.2\cdot  10^{6} $ & 100\% \\
\rowcolor{lightgray} 5 & 23 & 20 & $6.1\cdot  10^{6} $ & $ 3.3\cdot  10^{6} $ & 54\% \\ 
6 & 29 & 28 & $2.8\cdot  10^{7} $ & $ 2.3\cdot  10^{7} $ & 84\% \\ 
\rowcolor{lightgray} 7 & 34 & 26 & $9.0\cdot  10^{7} $ & $ 2.4\cdot  10^{7} $ & 27\% \\ 
8 & 39 & 38 & $2.1\cdot  10^{8} $ & $ 1.9\cdot  10^{8} $ & 90\% \\ 
9 & 44 & 41 & $4.5\cdot  10^{8} $ & $ 3.4\cdot  10^{8} $ & 75\% \\ 
10 & 49 & 43 & $9.0\cdot  10^{8} $ & $ 5.2\cdot  10^{8} $ & 58\% \\ 
\rowcolor{lightgray} 11 & 54 & 39 & $1.7\cdot  10^{9} $ & $ 4.4\cdot  10^{8} $ & 26\% \\ 
12 & 59 & 58 & $3.0\cdot  10^{9} $ & $ 2.8\cdot  10^{9} $ & 92\% \\ 
\rowcolor{lightgray} 13 & 64 & 45 & $5.3\cdot  10^{9} $ & $ 1.1\cdot  10^{9} $ & 21\% \\ 
14 & 69 & 57 & $8.9\cdot  10^{9} $ & $ 3.6\cdot  10^{9} $ & 41\% \\ 
15 & 74 & 66 & $1.5\cdot  10^{10} $ & $ 8.3\cdot  10^{9} $ & 56\% \\ 
16 & 80 & 79 & $2.6\cdot  10^{10} $ & $ 2.4\cdot  10^{10} $ & 93\% \\ 
\rowcolor{lightgray} 17 & 85 & 56 & $4.1\cdot  10^{10} $ & $ 5.1\cdot  10^{9} $ & 12\% \\ 
18 & 90 & 85 & $6.4\cdot  10^{10} $ & $ 4.7\cdot  10^{10} $ & 73\% \\ 
\rowcolor{lightgray} 19 & 95 & 62 & $9.1\cdot  10^{10} $ & $ 1.0\cdot  10^{10} $ & 11\% \\ 
20 & 101 & 91 & $1.3\cdot  10^{11} $ & $ 8.4\cdot  10^{10} $ & 64\% \\ 
21 & 106 & 88 & $1.8\cdot  10^{11} $ & $ 8.0\cdot  10^{10} $ & 44\% \\ 
22 & 111 & 83 & $2.5\cdot  10^{11} $ & $ 6.2\cdot  10^{10} $ & 25\% \\ 
\rowcolor{lightgray} 23 & 116 & 75 & $3.4\cdot  10^{11} $ & $ 3.9\cdot  10^{10} $ & 12\% \\ 
24 & 121 & 120 & $4.5\cdot  10^{11} $ & $ 4.3\cdot  10^{11} $ & 96\% \\ 
25 & 126 & 103 & $6.0\cdot  10^{11} $ & $ 2.3\cdot  10^{11} $ & 38\% \\ 
26 & 132 & 96 & $8.3\cdot  10^{11} $ & $ 1.8\cdot  10^{11} $ & 22\% \\ 
27 & 137 & 124 & $1.1\cdot  10^{12} $ & $ 6.5\cdot  10^{11} $ & 59\% \\ 
28 & 142 & 119 & $1.4\cdot  10^{12} $ & $ 5.7\cdot  10^{11} $ & 40\% \\ 
\rowcolor{lightgray} 29 & 148 & 93 & $2.0\cdot  10^{12} $ & $ 2.0\cdot  10^{11} $ & 10\% \\ 
30 & 153 & 138 & $2.6\cdot  10^{12} $ & $ 1.4 \cdot 10^{12} $ & 55\% \\
\hline
\end{tabular}
\end{table} \\
The \say{search space} column in Table \ref{trivhyb} provides the percentage of potential exceptional $q$ values to check when comparing the hybrid and trivial bounds. The rows for which $n \in \bb{P}$ are highlighted by grey. Computations support the assertion that the hybrid bound is most effective at the primes, when compared to the trivial bound. It is worth noting that even an incremental improvement on the $q$-range will drastically improve computation time. For a specific value $q$, we are essentially factorising $q^n-1$, which becomes increasingly infeasible when $n$ grows large. We consider the specific case when $n=5$. From the above calculation, if $ \omega_5 >20 $ then we must have $ q \in \mathcal{L}_5 $. In general, if $q \not\in \mathcal{L}_5 $ then $q$ is bounded below by the hybrid bound and bounded above by the prime sieve in Proposition \ref{primesieve}. We calculate the $q$-intervals for each $\omega_5 \in [1,20]$, which are presented in Table \ref{hybint}. 
\begin{table}[!ht] 
\scriptsize
\centering
\caption{The hybrid bound intervals.}
\label{hybint}
\begin{tabular}{|c|c|c|}
\hline 
$\omega_5$ &  \# $\leq q$  & $q \leq$ \# \\
\hline
 20 & 2 \ 160 \ 476  &  3 \ 336 \ 768 \\ 
 19 & 881 \ 791  &  2 \ 689 \ 898 \\ 
 18 & 397 \ 068  &  2 \ 150 \ 020 \\
 17 & 132 \ 575  &  1 \ 696 \ 702 \\ 
 16 & 46 \ 411  &  1 \ 327 \ 972 \\ 
 15 & 25 \ 794  &  1 \ 022 \ 189 \\ 
 14 & 9023  &  771 \ 455 \\ 
 13 & 3190  &  571 \ 449 \\
 12 & 1910  &  397 \ 505 \\ 
 11 & 701  &  260 \ 365 \\ 
 10 & 265  &  164 \ 401 \\ 
 9 & 179  &  101 \ 825 \\ 
 8 & 72  &  59 \ 399 \\ 
 7 & 31  &  32 \ 641 \\ 
 6 & 22  &  16 \ 925 \\ 
 5 & 10  &  7834 \\
 4 & 7  &  3175 \\ 
 3 & 4  &  1024 \\
 2 & 3  &  256 \\ 
 1 & 2  &  64 \\
 \hline
\end{tabular} 
\end{table}
Since the upper bound on $q$ is sufficiently small, we enumerate all potential prime powers $q$ in each $\omega_5$-interval. For any such $q$, we determine whether $\omega(q^5-1) = k$. The number of values of $q$ satisfying the $\omega_5$ criterion in each interval is shown in Table \ref{opts}. For $\omega_5 \in \{15,16,17,18,19,20\}$, all values of $q$ are members of $\mathcal{L}_5$.
\begin{table}[!ht] 
\centering
\scriptsize
\caption{$q$-value options in hybrid intervals.}
\label{opts}
\begin{tabular}{|c|c|}
\hline
$\omega_5$ & \# of options \\
\hline
14 &  1 \\
13 &  5 \\
12 &  19 \\
11 &  72 \\
10 &  214 \\
9 &  438 \\
8 &  683 \\
7 &  763 \\
6 &  625 \\
5 &  303 \\
4 &  68 \\
3 &  22 \\
2 &  2 \\
1 &  1 \\
\hline
\text{Total} & 3215 \\
\hline
\end{tabular}
\end{table} \\
In order to whittle down the potential exceptions further, we introduce more sophisticated sieves. In what follows, the radical $\text{Rad}(x)$ of a number $x$ is taken to mean the product of the distinct prime factors of $x$. 
\begin{Prop} (\textit{Modified prime sieve}) Let $q$ a prime power and $\omega_n \geq 2$. Consider $s$ distinct prime factors $\varphi_i$ of $q^n-1$ and some other prime factor $l$. Set $\delta:=\delta(\varphi_1,...,\varphi_s)$ and $k = \text{Rad}(q^n-1)/(l \cdot \prod_{i=1}^s \varphi_i)$. Suppose $\delta > 1/(l \cdot \phi(k))$. Then
$$ q > (n-1)^2 \left( \frac{2^{\omega_n-s-1}\phi(k)(s-1 + 2 \delta) + 1 - \frac{1}{l}}{\phi(k) \delta - \frac{1}{l}}-1 \right)^2 \quad \text{ implies } \quad q \in \mathcal{L}_n. $$
\end{Prop}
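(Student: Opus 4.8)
The plan is to derive this as a refinement of Cohen's prime sieve (Proposition \ref{primesieve}), since both descend from the same character-sum machinery for counting primitive elements on a line. First I would recall the standard ingredients: write the indicator that $a\gamma_2+\gamma_1$ is primitive as a weighted sum of multiplicative characters of $\fin{n}$, so that the number $N(\gamma)$ of admissible $a \in \fin{}$ splits into a main term $\theta(q^n-1)\,q$ and a collection of error terms, one per nontrivial squarefree character modulus dividing $q^n-1$. Each such character sum is controlled by the Weil bound, contributing at most $(n-1)\sqrt{q}$, and there are $2^{\omega_n}$ squarefree divisors in total; this is the source of the $2^{\omega_n-s}$ factor once the $s$ sieving primes $\varphi_i$ are removed.

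Next I would set up the Bonferroni-type sieve over $\varphi_1,\dots,\varphi_s$ exactly as for Proposition \ref{primesieve}. In that version the main term is $\delta q$ with $\delta = 1-\sum_i \varphi_i^{-1}$, the aggregate error is $(n-1)\sqrt{q}\,2^{\omega_n-s}(s-1+2\delta)$, and requiring $N(\gamma)>0$ and inverting gives $\sqrt{q} > (n-1)2^{\omega_n-s}(\tfrac{s-1}{\delta}+2)$. The modification is to bring the auxiliary prime $l$ into the sieve as a separate, carefully handled level rather than absorbing it into the crude doubling bound. Treating $l$ through its exact density $1-1/l$ and handling the residual core $k=\mathrm{Rad}(q^n-1)/(l\prod_i\varphi_i)$ via its totient $\phi(k)$ halves the dominant error factor from $2^{\omega_n-s}$ to $2^{\omega_n-s-1}$, at the price of shrinking the main-term coefficient from $\delta$ to $\delta - \tfrac{1}{l\,\phi(k)}$ and introducing the lower-order corrections involving $1-1/l$. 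Positivity of this coefficient is exactly the hypothesis $\delta > 1/(l\phi(k))$, which is what keeps the denominator $\phi(k)\delta - 1/l$ of the final bound positive.

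Assembling these pieces, I expect a sieving inequality of the shape
$$N(\gamma)\ge \frac{\theta(q^n-1)}{\phi(k)}\left[\left(\phi(k)\delta - \tfrac{1}{l}\right)q - (n-1)\sqrt{q}\left(2^{\omega_n-s-1}\phi(k)(s-1+2\delta)+1-\phi(k)\delta\right)\right].$$
The final step is then routine: the condition $N(\gamma)>0$ is a quadratic inequality in $\sqrt{q}$, and dividing through by the positive quantity $\phi(k)\delta - 1/l$ and rearranging produces precisely the stated threshold. The displayed "$-1$" appears exactly when the numerator term $+1-\phi(k)\delta$ is recombined with a copy of the denominator, converting it into $1-\tfrac{1}{l}$ minus an explicit $1$.

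I expect the main obstacle to be the delicate bookkeeping of the two-level sieve: verifying that singling out $l$ genuinely delivers the factor-of-two saving in the dominant term while correctly producing the totient factor $\phi(k)$ and the $1/l$ corrections, and checking that all sign conditions — most importantly $\phi(k)\delta > 1/l$ — hold so that the inequality may legitimately be inverted to isolate $q$. The character-sum estimates and the concluding algebra are standard; the conceptual work lies entirely in organising the refined sieve so that the claimed constants emerge.
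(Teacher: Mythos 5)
The paper does not actually prove this proposition: immediately after the statement it writes ``For a proof of this fact, see \cite[Prop.~6.3]{Cohen}'', so there is no in-paper argument to compare against. Judged on its own terms, your proposal identifies the right framework (the character-sum expression for the number $N(\gamma)$ of admissible $a$, the Weil bound contributing $(n-1)\sqrt{q}$ per squarefree character modulus, a Vinogradov-type sieve over the $\varphi_i$ with the auxiliary prime $l$ split off), and your concluding algebra is internally consistent: the displayed threshold is exactly equivalent to positivity of the bracket in the inequality you write down, with the ``$-1$'' absorbing the difference between $1-\tfrac{1}{l}$ and $1-\phi(k)\delta$ after adding one copy of the denominator $\phi(k)\delta-\tfrac{1}{l}$.

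The gap is that the sieving inequality you write down is the entire content of the proposition, and you do not derive it --- you introduce it with ``I expect a sieving inequality of the shape''. The two claims doing all the work, namely that isolating $l$ replaces the factor $2^{\omega_n-s}$ by $2^{\omega_n-s-1}$ and that the core $k$ enters through $\phi(k)$ together with the precise lower-order terms $1-\tfrac{1}{l}$ and $-\phi(k)\delta$, are asserted rather than proved; ``treating $l$ through its exact density'' is a slogan, not a computation. Those constants come from a careful pairing of the squarefree divisors $d$ and $dl$ of $\mathrm{Rad}(q^n-1)$ and from exact evaluation of the $\mu(d)/\phi(d)$-weighted sums over divisors of $k$, and without carrying out that bookkeeping one cannot exclude a different lower-order correction that would change the final threshold. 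As it stands the proposal is a plausible plan for reproving Cohen's Proposition~6.3, not a proof; to complete it you would need to establish the displayed lower bound for $N(\gamma)$ explicitly (or, as the paper does, simply cite \cite[Prop.~6.3]{Cohen}).
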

For a proof of this fact, see \cite[Prop. 6.3]{Cohen}. In practice, $l$ is always chosen to be the largest prime factor of $q^n-1$ and $k$ to be the product of the smallest factors. Bailey et al. \cite[Lemma 1]{BCST} proved a generalised version.
\begin{Prop} (\textit{General prime sieve}) Let $q$ be a prime power and write the radical of $q^n-1$ as $ \text{Rad}(q^n-1)= k \cdot (\prod_{i=1}^s \varphi_i)( \prod_{j=1}^r l_j )$. Set $\delta = 1-\sum_{i=1}^s\frac{1}{\varphi_i}$, $\epsilon = \sum_{j=1}^r\frac{1}{l_j}$ and $m = \phi(k)/k$. If $\delta m > \epsilon$ then
$$ q > (n-1)^2 \left( \frac{2^{\omega_n-s-r}m(s-1+2\delta) - \delta m+ r - \epsilon}{\delta m-\epsilon}\right)^2  \quad \text{ implies } \quad q \in \mathcal{L}_n. $$
\end{Prop}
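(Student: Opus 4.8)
The plan is to prove this by the standard character-sum sieve, exactly as in Cohen's proof of Proposition~\ref{primesieve}, but refining the combinatorial bookkeeping in the manner of Bailey et al. Fix admissible $\gamma_1,\gamma_2 \in \fin{n}\setminus\{0\}$ with $\gamma_1/\gamma_2$ generating $\fin{n}$, and let $N$ be the number of $a \in \fin{}$ for which $a\gamma_2+\gamma_1$ is primitive in $\fin{n}$; it suffices to show $N>0$. Writing $Q=q^n$, primitivity of $w$ is the same as $w$ being $(Q-1)$-free, so for each $e \mid Q-1$ I would use the indicator for $e$-free elements
$$ \rho_e(w) = \theta(e)\sum_{d\mid e}\frac{\mu(d)}{\phi(d)}\sum_{\mathrm{ord}(\chi)=d}\chi(w), \qquad \theta(e)=\frac{\phi(e)}{e}. $$

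Let $N_e$ be the number of $a$ with $a\gamma_2+\gamma_1$ being $e$-free; summing $\rho_e$ over $a$ gives $N_e = \theta(e)\sum_{d\mid e}\frac{\mu(d)}{\phi(d)}\sum_{\mathrm{ord}(\chi)=d} S_\chi$, where $S_\chi = \sum_{a\in\fin{}}\chi(a\gamma_2+\gamma_1)$. The principal character ($d=1$) contributes the main term $\theta(e)q$, while for every nontrivial $\chi$ the Weil bound applied to the $\fin{}$-line $\{a\gamma_2+\gamma_1\}$---which is not contained in a proper subfield precisely because $\gamma_1/\gamma_2$ generates $\fin{n}$---yields $|S_\chi|\le (n-1)\sqrt q$. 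Counting squarefree divisors by $W(e)=2^{\omega(e)}$, this gives the uniform estimate $|N_e-\theta(e)q|\le \theta(e)\,(W(e)-1)\,(n-1)\sqrt q$, which is the source of the factor $(n-1)$ in the final bound.

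Next I would apply the sieving inequality. Writing $\mathrm{Rad}(Q-1)=k\cdot(\prod_i\varphi_i)(\prod_j l_j)$ and taking $k$ as the sieving core, Cohen's sieve gives
$$ N \ \ge\ \sum_{i=1}^s N_{k\varphi_i} + \sum_{j=1}^r N_{kl_j} - (s+r-1)\,N_k. $$
Substituting the estimate above and collecting main terms produces a density factor proportional to $\delta m-\epsilon$ (recall $m=\theta(k)=\phi(k)/k$), while the error terms accumulate to $2^{\omega_n-s-r}$---which is exactly $W(k)$, since the core $k$ carries the remaining $\omega_n-s-r$ prime factors---times $(n-1)\sqrt q$ and the combinatorial weight $m(s-1+2\delta)$, together with the lower-order correction $-\delta m+r-\epsilon$. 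Under the hypothesis $\delta m>\epsilon$ the density factor is positive, so $N>0$ is forced once $\sqrt q$ exceeds $(n-1)$ times the ratio of the error factor to the density factor; squaring this inequality yields exactly the stated bound, whence $q\in\mathcal{L}_n$.

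The delicate point---and the step where the generalised sieve genuinely departs from Proposition~\ref{primesieve}---is the asymmetric bookkeeping that makes the $\varphi_i$-primes enter through $\delta m$ but the auxiliary $l_j$-primes enter through the bare $\epsilon=\sum_j 1/l_j$, \emph{without} the core density $m$. I would obtain this by sieving the two families differently: the $\varphi_i$ are removed alongside the core $k$, so their contribution is scaled by $\theta(k)=m$, whereas the $l_j$ are estimated by a coarser inequality that forgoes the core density and leaves $\epsilon$ unscaled. Verifying that these two strands recombine into precisely the numerator $2^{\omega_n-s-r}m(s-1+2\delta)-\delta m+r-\epsilon$ over the denominator $\delta m-\epsilon$, with no slack lost, is the main obstacle and the part demanding the most careful accounting; by comparison the Weil estimate and the reduction to $N>0$ are routine.
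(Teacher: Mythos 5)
The paper does not actually prove this proposition: it is quoted verbatim from Bailey, Cohen, Sutherland and Trudgian \cite[Lemma 1]{BCST}, so there is no internal argument to compare yours against. Judged on its own terms, your outline does follow the strategy by which the result is genuinely proved: the $e$-free indicator with the Katz--Weil estimate $\abs{S_\chi}\le (n-1)\sqrt{q}$ on the line (valid precisely because $\gamma_1/\gamma_2$ generates $\fin{n}$), a sieving inequality over the core $k$, and an asymmetric treatment in which the $\varphi_i$ contribute through differences such as $N(k\varphi_i)-\theta(\varphi_i)N(k)$ (hence carry the core density $m=\phi(k)/k$), while each $l_j$ is handled by directly counting the $a$ for which $a\gamma_2+\gamma_1$ is an $l_j$-th power, roughly $q/l_j+(1-1/l_j)(n-1)\sqrt{q}$, which is what produces the unscaled $\epsilon$ in the main term and the $r-\epsilon$ in the error term.

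The gap is that the proposition \emph{is} its exact constant, and you never derive it --- you explicitly defer the recombination into
$$\frac{2^{\omega_n-s-r}m(s-1+2\delta)-\delta m+r-\epsilon}{\delta m-\epsilon}$$
as ``the main obstacle.'' This is not a detail that can be waved through: the coefficient $s-1+2\delta$ only emerges from the precise forms of the intermediate estimates (the bound on $N(k\varphi_i)-\theta(\varphi_i)N(k)$ in terms of $W(k)=2^{\omega_n-s-r}$, the lower bound on $N(k)$ itself, and the exact error in the $l_j$-th power count), and a looser but equally plausible bookkeeping yields a different, generally weaker, constant. Moreover, your two descriptions of the $l_j$-treatment are in tension: you first place the $l_j$ inside the inclusion--exclusion as $N_{kl_j}$ with weight $-(s+r-1)N_k$, and then say they are ``estimated by a coarser inequality that forgoes the core density''; you need to commit to one mechanism (e.g.\ $N_{kl_j}-N_k\ge -q/l_j-(1-1/l_j)(n-1)\sqrt{q}$) and carry it through, since that choice is exactly what decides whether $\epsilon$ appears with or without the factor $m$. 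To complete the proof, state the two intermediate estimates explicitly and perform the algebra; alternatively, since the paper itself only cites \cite{BCST} here, a citation would match the paper's treatment.
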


Returning to the line problem for $n=5$, we iterate the prime, modified and general sieve to filter out more $q$ values. When applying the sieves, we factorise the radical $ \text{Rad}(q^n-1)= k \cdot (\prod_{i=1}^s\varphi_i)( \prod_{j=1}^r l_j )$ in such a way that the sieve bound is as small as possible. For any $\rho \in \bb{P}$ with $\rho \mid k$ and any $i,j$, we always factorise $\text{Rad}(q^n-1)$ such that $\rho < \varphi_i < l_j$. Then, minimising the sieves amounts to choosing only the number $r$ and $s$ of factors $\varphi_i$ and $l_j$, respectively. The number of unsieved $q$-values for each sieve appear in Table \ref{unsieved}, all values of which are known explicitly.
\begin{table}[!ht] 
\scriptsize
\centering
\caption{Unsieved $q$-values in hybrid intervals.}
\label{unsieved}
\begin{tabular}{|c|c|c|c|}
\hline
$\omega_5$  &  Prime sieve &  \text{Modified sieve} &  \text{General sieve} \\
\hline
14 &  0 &  0 &  0  \\
13 &  0 &  0 &  0 \\
12 &  1 &  1 &  1 \\
11 &  10 &  10 &  10 \\
10 &  25 &  19 &  18 \\
9  &  48 &  37 &  35 \\
8  &  143 &  112 &  111 \\
7  &  189 &  144 &  144 \\
6  &  197 &  161 &  161 \\
5  &  136 &  115 &  115 \\
4  &  42 &  33 &  33 \\
3  &  22 &  16 &  16 \\
2  &  2 &  2 &  2 \\
1  &  1 &  1 &  1 \\
\hline
\text{Totals} &  816 & 651 & 647 \\
\hline
\end{tabular}
\end{table} 
As is apparent, only a small set of prime powers can be non-members of $\mathcal{L}_5$.
\begin{Prop} \label{lineprob}
Let $q$ be a prime power and $E_5$ be the set of 647 prime powers described in the appendix, the largest of which is 62791. If $ q \not\in E_5 $, then $ q \in \mathcal{L}_5 $.
\end{Prop}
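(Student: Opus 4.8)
The plan is to confine any exceptional $q$ to a finite, explicitly searchable region by pairing the hybrid lower bound with the sieve upper bounds, and then to clear that region computationally. The key point is that once $\omega_5 := \omega(q^5-1)$ is fixed, a prime power $q \notin \mathcal{L}_5$ is squeezed from both sides. From below, Theorem \ref{hybridb} specialised to $n=5$ gives $q \geq L(\omega_5)$, the quantity tabulated in the left column of Table \ref{hybint}. From above, Proposition \ref{primesieve} guarantees $q \in \mathcal{L}_5$ as soon as $q > R_G$; choosing the $\varphi_i$ to be the largest primes consistent with $\omega_5$ and minimising over admissible $s$ produces the explicit upper bound $U(\omega_5) = R_G$ in the right column of Table \ref{hybint}. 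Hence every exception satisfies $L(\omega_5) \leq q \leq U(\omega_5)$.

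First I would dispose of large $\omega_5$. As recorded in Table \ref{trivhyb}, the hybrid bound for $n=5$ detects an $\omega_5$ cut-off of $20$: for every $\omega_5 > 20$ the lower bound $L(\omega_5)$ already exceeds $R_G$, so $q > R_G$ and $q \in \mathcal{L}_5$ by Proposition \ref{primesieve}. This is exactly the phenomenon described in Remark \ref{sufflarge}, and it leaves only the finitely many intervals indexed by $\omega_5 \in [1,20]$.

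Next, for each such $\omega_5$ I would enumerate every prime power in $[L(\omega_5), U(\omega_5)]$ and keep only those with $\omega(q^5-1) = \omega_5$, since a genuine exception must match its own prime-factor count; this yields the $3215$ candidates of Table \ref{opts}, with the intervals for $\omega_5 \in \{15,\dots,20\}$ contributing none. To each surviving candidate I would then apply, in turn, the prime sieve, the modified prime sieve and the general prime sieve, now exploiting the known factorisation of $q^5-1$ to split $\text{Rad}(q^5-1) = k \cdot (\prod_i \varphi_i)(\prod_j l_j)$ so as to minimise each sieve bound, discarding a candidate the moment any sieve certifies $q \in \mathcal{L}_5$. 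Iterating the three sieves trims the list from $3215$ to the $647$ values of Table \ref{unsieved}; these form the set $E_5$ listed in the appendix. Consequently a prime power $q \notin E_5$ was eliminated either at the $\omega_5 > 20$ stage, or as a non-candidate, or by a sieve, so $q \in \mathcal{L}_5$, as claimed.

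The conceptual content is slight --- interval confinement together with three already-available sieve criteria --- and I expect the main obstacle to be purely computational. The bottleneck is the per-candidate factorisation of $q^5-1$ needed to run the modified and general sieves: the largest interval reaches $q \approx 3.3 \times 10^6$, where $q^5-1$ is a $33$-digit integer, and this must be factored for each candidate before the optimal $k$, $\varphi_i$ and $l_j$ can be chosen. The practical payoff of the hybrid bound is precisely here, since lowering the cut-off from $23$ (trivial) to $20$ (hybrid) shrinks the search space to roughly half and makes the enumeration tractable.
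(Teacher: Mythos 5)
Your proposal is correct and follows essentially the same route as the paper: hybrid lower bound plus prime-sieve upper bound to confine exceptions to the intervals of Table \ref{hybint} with cut-off $\omega_5 \leq 20$, enumeration of the $3215$ candidates matching their $\omega_5$ count, and iterated application of the prime, modified and general sieves to reduce to the $647$ elements of $E_5$. Your remarks on the computational bottleneck and the value of lowering the cut-off from $23$ to $20$ also match the paper's discussion.
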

\begin{Rem}
Let us generalise our approach for tackling the line problem in the case where $n=5$ to any fixed $n \in \bb{Z}^+$. For some distinguished set $\mathcal{A}_n$, we want to determine for which prime powers $q$ we have the membership $q \in \mathcal{A}_n$. In the case of the line problem, this set $\mathcal{A}_n$ is $\mathcal{L}_n$. We have a sieve criterion of the form
$$ q > S(n,\omega_n) \quad \text{ implies } \quad q \in \mathcal{A}_n, $$
where $S(n,\omega_n)$ is some function of $n$ and $\omega_n$. In Proposition \ref{primesieve}, we might take $s=0$, obtaining the criterion $ q > (n-1)^22^{2 \omega_n} $ implies $q \in \mathcal{A}_n $. Cohen used this sieve in his original work for $\mathcal{L}_3$, see \cite[Cor. 2.4]{Cohen}. This allows us to describe a general procedure for the line problem. A similar approach extends to other existence problems for primitive elements in finite fields. \\

\underline{Step 1:} Finiteness. \\
For fixed $\omega_n$, we compare the sieve criterion $S(n,\omega_n)$ with a lower bound $B(n,\omega_n)$ on $q$, dependant on $n$ and $\omega_n$. Then, if $q \geq B(n,\omega_n) > S(n,\omega_n)$, we conclude that $q \in \mathcal{A}_n$. By Remark \ref{sufflarge}, when $\omega_n$ is sufficiently large, this will always be the case. We obtain a cut-off value $c$ for $\omega_n$, above which it is guaranteed that $q \in \mathcal{A}_n$. The sharpness of the the cut-off will be dependent on the sophistication of both $S(n,\omega_n)$ and $B(n,\omega_n)$. This is where Theorems \ref{hybridb} and \ref{ramifb} become powerful tools for reducing $\omega_n$. \\

\underline{Step 2:} Intervals. \\
For each $\omega_n \leq c$, we construct the intervals $I_{\omega_n} = [B(n,\omega_n),S(n,\omega_n)]$. Thus, if $q \notin \mathcal{A}_n$, then $q \in I_{\omega_n}$ for some value of $\omega_n$. For the line problem, these intervals are described in Table \ref{hybint}. Using Theorem \ref{hybridb} over the trivial bound in \eqref{trivialb}, shortens these intervals by increasing $B(n,\omega_n)$. \\

\underline{Step 3:} Enumeration. \\
If the previous steps have reduced $I_{\omega_n}$ sufficiently, we enumerate the possible prime powers in each interval. For a given prime power in the interval $I_{\omega_n}$, we check if indeed $q^n-1$ has $\omega_n$ distinct prime factors. This is summarised in Table \ref{opts}. \\

\underline{Step 4:} Sieving. \\
For the remaining potential counterexamples $q \notin \mathcal{A}_n$, we reapply our sieve criterion. Since the exact value of $q$ is known, we obtain a better sieve bound in Proposition \ref{primesieve} by including specific information about $q$. In a similar vein, we apply other sieves to eliminate more values of $q$. \\

\underline{Step 5:} Direct verification. \\
Once we exhaust all theoretical sieving techniques, we return to the specific problem of finding a primitive element on a line inside $\fin{n}$. Given that the list of potential counterexamples is small enough to be computable, we verify directly if there exists such primitive elements. 
\end{Rem}
The set $E_5$ of potential counterexamples is expected to contain very few genuine non-members $q \notin \mathcal{L}_5$. In the case of $n=3$, Cohen \cite[Th. 6.4]{Cohen} proved that the number of genuine non-members $q \notin \mathcal{L}_3$ was at most 183. By contrast, Bailey et al. \cite[Th. 2]{BCST} proved that $q \notin \mathcal{L}_3$ if and only if $q \in \{3,4,5,7,9,11,13,31,37\}$. In the same paper, they gave a set of 1469 potential non-members $q \notin \mathcal{L}_4 $ but only 21 of these were proven to be genuine in the above sense. See \cite[Th. 3 \& Th. 5]{BCST}.

\subsection{Primitive element sums}
Let $q \in \bb{Z}^+$ be a prime power and consider the following problem. For which pairs $(q,n)$ does there exist a primitive element $\alpha \in \fin{n}$ such that $\alpha+\alpha^{-1}$ is also primitive? Fix $n$ and define
$$ \mathcal{U}_n := \{ q=p^m \mid \text{ there exists } \alpha \in \fin{n} \text{ such that } \alpha,\alpha+\alpha^{-1} \text{ primitive}\}. $$
Liao, Li and Pu \cite{LLP} provided a partial answer to this question and a sufficient condition for membership $q \in \mathcal{U}_n$.
\begin{Prop}
Let $n \in \bb{Z}^+$ and $q$ a prime power coprime to $n$. Then \label{LLPtheo}
$ q^{n/2} > 2^{2 \omega_n} $ implies $ q \in \mathcal{U}_n. $ 
\end{Prop}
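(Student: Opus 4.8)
The plan is to count, using multiplicative characters, the number $N$ of elements $\alpha \in \fin{n}^{\times}$ for which \emph{both} $\alpha$ and $\alpha+\alpha^{-1}$ are primitive, and then to show that $N>0$ whenever $q^{n/2}>2^{2\omega_n}$. Write $Q=q^n-1$ and set $\theta(Q)=\phi(Q)/Q$. The engine of the argument is the standard characteristic function for primitivity: for $\beta\in\fin{n}^{\times}$,
$$ \mathbf{1}_{\mathrm{prim}}(\beta)=\theta(Q)\sum_{d\mid Q}\frac{\mu(d)}{\phi(d)}\sum_{\mathrm{ord}(\chi)=d}\chi(\beta), $$
where the inner sum runs over multiplicative characters $\chi$ of $\fin{n}^{\times}$ of exact order $d$. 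Since $\mu(d)=0$ for non-squarefree $d$, the outer sum is supported on the $2^{\omega_n}$ squarefree divisors of $Q$, which is ultimately the source of the factor $2^{2\omega_n}$.

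First I would write
$$ N=\sum_{\substack{\alpha\in\fin{n}^{\times}\\ \alpha^2\neq -1}}\mathbf{1}_{\mathrm{prim}}(\alpha)\,\mathbf{1}_{\mathrm{prim}}(\alpha+\alpha^{-1}), $$
the restriction $\alpha^2\neq-1$ guaranteeing $\alpha+\alpha^{-1}\neq 0$ so that the second indicator is defined. Expanding both indicators introduces two characters $\chi_1,\chi_2$ and reduces $N$ to a weighted average of the mixed character sums $S(\chi_1,\chi_2)=\sum_{\alpha}\chi_1(\alpha)\chi_2(\alpha+\alpha^{-1})$. Writing $\alpha+\alpha^{-1}=(\alpha^2+1)/\alpha$ and collecting characters, each such sum takes the shape $\sum_{\alpha}\psi(\alpha)\chi_2(\alpha^2+1)$ with $\psi=\chi_1\chi_2^{-1}$, i.e.\ a sum of two multiplicative characters evaluated at the rational functions $\alpha$ and $\alpha^2+1$.

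Next I would isolate the principal term $d_1=d_2=1$ (both characters trivial), which contributes $\theta(Q)^2$ times the number of admissible $\alpha$, namely $\theta(Q)^2\bigl(q^n+O(1)\bigr)$; this is the main term $\approx\theta(Q)^2 q^n$. For every remaining pair at least one character is nontrivial, and here the key input is Weil's bound for mixed multiplicative character sums: provided $\psi(\alpha)\chi_2(\alpha^2+1)$ is not a perfect power in $\fin{n}(\alpha)$, the sum enjoys square-root cancellation and $|S(\chi_1,\chi_2)|\le C\,q^{n/2}$, with $C$ a small absolute constant determined by the zeros and poles of $(\alpha^2+1)/\alpha$ (located at $0$, the two roots of $\alpha^2+1$, and $\infty$). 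Summing over the $2^{\omega_n}$ squarefree divisors attached to each indicator, the total nonprincipal contribution is at most $\theta(Q)^2\,C\,2^{2\omega_n}\,q^{n/2}$. Comparing this against the main term yields $N>0$, and hence a valid $\alpha$, precisely once $q^{n/2}>2^{2\omega_n}$ after absorbing $C$ and the $O(1)$ admissibility corrections.

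The step I expect to be the main obstacle is the non-degeneracy verification underpinning Weil's bound: one must check, for every nontrivial pair $(\chi_1,\chi_2)$, that $\psi(\alpha)\chi_2(\alpha^2+1)$ is not an $\ell$-th power for any $\ell>1$ dividing the relevant character orders, since otherwise the sum degenerates and square-root cancellation fails. This is the natural entry point for the hypothesis $(q,n)=1$, which keeps the zeros and poles of $(\alpha^2+1)/\alpha$ distinct modulo the character orders and excludes the degenerate identifications (most delicately when $\chi_2$ has order $2$, where $\chi_2(\alpha^2+1)$ threatens to collapse into a quadratic-character square). A secondary, purely bookkeeping, difficulty is tracking $C$ and the admissibility corrections tightly enough that the clean threshold $2^{2\omega_n}$ survives rather than an inflated variant.
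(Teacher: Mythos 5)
First, a point of order: the paper does not prove this proposition at all --- it is quoted from Liao, Li and Pu \cite{LLP} as an external input (like the sieve criteria in this section), so there is no internal proof to measure your attempt against. Judged on its own terms, your sketch follows the standard template for such results (Vinogradov's characteristic function for primitivity, expansion into the mixed sums $\sum_\alpha \psi(\alpha)\chi_2(\alpha^2+1)$, Weil's bound for the non-principal terms), and this is almost certainly the skeleton of the argument in \cite{LLP}.

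That said, two concrete issues keep the sketch from establishing the statement as written. The main one is the constant you defer as ``bookkeeping.'' For a non-degenerate pair, the Weil bound for $\sum_\alpha \psi(\alpha)\chi_2(\alpha^2+1)$ is $\left(\deg(\alpha)+\deg(\alpha^2+1)-1\right)q^{n/2}=2q^{n/2}$, not $q^{n/2}$; summing the weights $\abs{\mu(d_1)\mu(d_2)}$ over squarefree $d_1,d_2\mid q^n-1$ then yields an error term of roughly $2\cdot 2^{2\omega_n}q^{n/2}$ against a main term of $q^n$, so positivity of $N$ follows only from $q^{n/2}>2^{2\omega_n+1}$ --- a factor of $2$ short of the stated threshold. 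Recovering the constant $1$ is not a matter of ``absorbing $C$'': it requires a genuinely finer step (separating the pairs in which one character is trivial, an exact Jacobi-sum evaluation, or a sieving refinement in the spirit of Proposition \ref{primesieve}), and as sketched you would prove a strictly weaker inequality than the one claimed. Secondly, your proposed role for the hypothesis $\gcd(q,n)=1$ is misplaced: the degeneration you worry about, namely $\chi_2(\alpha^2+1)$ collapsing into a square, is the identity $\alpha^2+1=(\alpha+1)^2$, which is governed by the characteristic (i.e.\ the parity of $q$), not by $\gcd(q,n)$; and in characteristic $2$ it is in any case harmless because $q^n-1$ is odd and admits no character of order $2$. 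The coprimality of $q$ and $n$ does not visibly enter your argument where you place it, so an honest write-up would either locate its actual use or note that it is not needed for this step.
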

\begin{Rem}
The criterion in Theorem \ref{LLPtheo} is equivalent to \label{shortint}
$$ q > 2^{4\omega_n/n} \quad \text{ implies } \quad q \in \mathcal{U}_n.$$
Thus, for $n > 4\omega_n$, we have $ q \not\in \mathcal{U}_n $ implies $ q \leq 2^{4 \omega_n/n} < 2 $ which in turn results in $ q = 1 $. Since $q$ is assumed to be a prime power, this never occurs. Therefore, for any choice of $\omega_n$, we need only check $n \leq 4 \omega_n$. Given fixed $n$ and $\omega_n$, we have the trivial bound $ q^n > q^n -1 \geq \prod_{i=1}^{\omega_n} s_i  $. Thus, if $q \notin \mathcal{U}_n$, then $q^n$ must lie in the interval $ \prod_{i=1}^{\omega_n} s_i < q^n \leq 2^{4 \omega_n} = 16^{\omega_n} $.
\end{Rem}
\begin{Prop} 
Let $\omega_n \in \bb{Z}^+$, then $ \omega_n < 16 $ if and only if $ \prod_{i=1}^{\omega_n} s_i < 16^{\omega_n}$.
\end{Prop}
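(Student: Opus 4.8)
The plan is to study the single quantity
$r_k := \left(\prod_{i=1}^{k} s_i\right)\big/ 16^{k} = \prod_{i=1}^{k} (s_i/16)$,
and to observe that $\prod_{i=1}^{k} s_i < 16^{k}$ holds if and only if $r_k < 1$. The structural fact driving everything is that consecutive terms satisfy $r_{k+1}/r_k = s_{k+1}/16$, so the monotonic behaviour of $(r_k)$ is controlled entirely by whether the next prime lies below or above $16$.

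First I would record the elementary input: the primes below $16$ are exactly $s_1,\dots,s_6 = 2,3,5,7,11,13$, whereas $s_7 = 17 > 16$ and, since the primes increase, $s_i > 16$ for every $i \ge 7$. Hence the factor $s_i/16$ is $<1$ for $i \le 6$ and $>1$ for $i \ge 7$. It follows that $(r_k)$ is strictly decreasing on $1 \le k \le 6$ and strictly increasing for $k \ge 6$, i.e. $(r_k)$ is unimodal with its unique minimum at $k=6$. This reduces the problem to locating the single value of $k$ at which $r_k$ crosses the threshold $1$ on its increasing branch.

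Next I would collapse the infinitely many inequalities into two explicit checks. For $1 \le k \le 6$ we have $r_k \le r_1 = 1/8 < 1$, and for $7 \le k \le 15$ monotonicity on the increasing branch gives $r_k \le r_{15}$; thus the single verification $\prod_{i=1}^{15} s_i < 16^{15}$ (that is, $r_{15} < 1$) yields $r_k < 1$ for all $k \le 15$. Conversely, since $(r_k)$ increases for $k \ge 6$, the verification $\prod_{i=1}^{16} s_i > 16^{16}$ (that is, $r_{16} > 1$) gives $r_k \ge r_{16} > 1$ for every $k \ge 16$, so the inequality $\prod_{i=1}^{k} s_i < 16^{k}$ fails there. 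Combining the two directions produces the stated equivalence $\omega_n < 16 \iff \prod_{i=1}^{\omega_n} s_i < 16^{\omega_n}$.

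The only genuine computation is the pair of boundary comparisons. Concretely, $\prod_{i=1}^{16} s_i = 32589158477190044730$ while $16^{16} = 2^{64} = 18446744073709551616$, so $r_{16} > 1$; and $\prod_{i=1}^{15} s_i = 614889782588491410$ while $16^{15} = 2^{60} = 1152921504606846976$, so $r_{15} < 1$. I do not anticipate any real difficulty beyond confirming these two values: the entire force of the argument is the unimodality reduction, which is what makes two boundary checks sufficient instead of having to examine each $k$ individually.
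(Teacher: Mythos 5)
Your proof is correct and rests on the same two pillars as the paper's: the boundary computations at $\omega_n = 15$ and $\omega_n = 16$, and the fact that $s_i/16 > 1$ for all $i \geq 7$, which forces the normalised product to grow past the threshold. Your unimodality observation is a mild refinement (it collapses the paper's ``quick calculation'' over all fifteen cases $\omega_n < 16$ into a single check at $\omega_n = 15$), but the route is essentially identical.
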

\begin{proof}
A quick calculation shows that the right hand side is true whenever $\omega_n < 16$. Moreover, for $\omega_n = 16$, we have $\prod_{i=1}^{16} s_i > 16^{16}$. If $\omega_n = 16+k$ for $k\in \bb{Z}^+$, then
$$ 16^{-\omega_n} \cdot \prod_{i=1}^{\omega_n} s_i = 16^{-k} \prod_{i=17}^{16+k} s_i \cdot \left( 16^{-16} \cdot \prod_{i=1}^{16} s_i \right) > 16^{-k} \prod_{i=17}^{16+k} s_i = \prod_{i=17}^{16+k} \frac{s_i}{16}. $$
Since $s_i > 16$ for any $i>6$, we conclude that indeed $ 16^{-\omega_n} \cdot \prod_{i=1}^{\omega_n} s_i > 1 $. \qedhere
\end{proof}
\begin{Cor}
If $q \notin \mathcal{U}_n $, then $ \omega_n \leq 15$.
\end{Cor}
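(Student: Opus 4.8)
The plan is to read off the conclusion directly from the two immediately preceding results, namely the interval in Remark \ref{shortint} and the equivalence in the preceding Proposition. First I would assume $q \notin \mathcal{U}_n$. Remark \ref{shortint} then supplies the chain of inequalities $\prod_{i=1}^{\omega_n} s_i < q^n \leq 16^{\omega_n}$, obtained by combining the trivial bound $q^n > q^n - 1 \geq \prod_{i=1}^{\omega_n} s_i$ with the contrapositive of the membership criterion $q > 2^{4\omega_n/n}$. Discarding the intermediate term $q^n$ and using transitivity yields the single inequality $\prod_{i=1}^{\omega_n} s_i < 16^{\omega_n}$.

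Next I would invoke the preceding Proposition, which asserts that $\prod_{i=1}^{\omega_n} s_i < 16^{\omega_n}$ holds precisely when $\omega_n < 16$. Applying the forward direction of that equivalence to the inequality just obtained gives $\omega_n < 16$ at once. Since $\omega_n$ is a positive integer, the strict bound $\omega_n < 16$ is equivalent to $\omega_n \leq 15$, which is exactly the assertion of the corollary.

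The argument is essentially a one-line deduction stitching together two already-established facts, so I do not anticipate any genuine obstacle. The only point deserving a moment's care is the passage from the two-sided interval to the single inequality feeding into the Proposition: the upper estimate $q^n \leq 16^{\omega_n}$ is non-strict, whereas the lower estimate $\prod_{i=1}^{\omega_n} s_i < q^n$ is strict, and it is this strict lower bound that guarantees $\prod_{i=1}^{\omega_n} s_i < 16^{\omega_n}$ is itself strict. That strictness is what places us squarely in the $\omega_n < 16$ regime and excludes the boundary case $\omega_n = 16$, so the conclusion $\omega_n \leq 15$ follows cleanly.
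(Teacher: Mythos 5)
Your argument is correct and matches the paper's proof in substance: the paper likewise deduces from $q \notin \mathcal{U}_n$ that $q^n$ lies in the interval $\left(\prod_{i=1}^{\omega_n} s_i,\, 16^{\omega_n}\right]$, whose non-emptiness is equivalent (by the preceding Proposition) to $\omega_n \leq 15$. Your spelling out of the transitivity step and the role of strictness is just a more explicit rendering of the same one-line deduction.
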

\begin{proof}
This follows immediately from the fact that $q^n \in (\prod_{i=1}^{\omega_n} s_i,16^{\omega_n}] $ which is non-empty if and only if $\omega_n \leq 15$. \qedhere
\end{proof}
Thus, any possible exceptions $q \notin \mathcal{U}_n$ must satisfy $\omega_n \leq 15$ and, by Remark \ref{shortint}, also $n \leq 4 \omega_n \leq 60$. Note that, for $n=1$, this question has been completely resolved by Cohen, Silva, Sutherland and Trudgian \cite[Cor. 2]{CSST}.
\begin{Prop}
Let $q$ be a prime power. Then
$$q \notin \{2,3,4,5,7,9,13,25,121\} \quad \text{ if and only if } \quad q \in \mathcal{U}_1. $$
\end{Prop}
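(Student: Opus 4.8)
The statement is a biconditional, which I would split into the two implications $q\notin S\Rightarrow q\in\mathcal{U}_1$ and $q\in S\Rightarrow q\notin\mathcal{U}_1$, where $S=\{2,3,4,5,7,9,13,25,121\}$. The reverse implication is a finite verification: for each of the nine fields $\fin{}$ with $q\in S$, one exhausts all primitive $\alpha\in\fin{}$ and checks that $\alpha+\alpha^{-1}$ is never simultaneously primitive. The forward implication carries the analytic and computational weight, and the plan is to combine the character-sum sieve of Liao, Li and Pu \cite{LLP} with a finite search.

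For the forward direction, write $N_q$ for the number of $\alpha\in\fin{}$ with $\alpha^2\neq-1$ such that both $\alpha$ and $\alpha+\alpha^{-1}$ are primitive. Using the standard primitivity indicator $\varrho(\beta)=\frac{\phi(q-1)}{q-1}\sum_{d\mid q-1}\frac{\mu(d)}{\phi(d)}\sum_{\mathrm{ord}(\chi)=d}\chi(\beta)$ and the identity $\alpha+\alpha^{-1}=(\alpha^2+1)/\alpha$, one expands $N_q=\sum_\alpha\varrho(\alpha)\varrho(\alpha+\alpha^{-1})$ into mixed multiplicative character sums of the shape $\sum_\alpha(\chi_1\bar\chi_2)(\alpha)\,\chi_2(\alpha^2+1)$. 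Isolating the principal term $\approx q$ and bounding every non-principal sum by Weil's theorem produces exactly the criterion of Proposition \ref{LLPtheo}; specialised to $n=1$ (where the coprimality hypothesis is vacuous) it reads $q>16^{\omega_1}$ implies $q\in\mathcal{U}_1$. By the Corollary following Proposition \ref{LLPtheo}, any exception must therefore satisfy $\omega_1=\omega(q-1)\le 15$ and lie in the finite union of intervals $\left(\prod_{i=1}^{\omega_1}s_i,\,16^{\omega_1}\right]$.

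It then remains to clear these intervals. For each $\omega_1\le 15$ I would enumerate the prime powers $q$ in the corresponding interval for which $q-1$ has exactly $\omega_1$ distinct prime factors, apply sharper sieves analogous to the general prime sieve (now exploiting the explicitly known factorisation of $q-1$) to discard all but a short residual list, and finally test those survivors directly in $\fin{}$. Assembling the survivors that genuinely fail with the nine fields of the reverse direction yields exactly $S$.

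The principal obstacle is computational rather than conceptual, and it is worth stressing why the hybrid machinery of this paper does not lighten the load here: for $n=1$ the polynomial $x-1$ admits no further cyclotomic factorisation, so Theorem \ref{hybridb} collapses to the trivial bound \eqref{trivialb} and the lower endpoints $\prod_{i=1}^{\omega_1}s_i$ cannot be raised. Consequently the search interval is as wide as possible and the analytic input must do all the work; the delicate step is fixing the Weil constant sharply enough, correctly handling the excluded solutions of $\alpha^2=-1$ and verifying the non-degeneracy hypotheses, since this constant fixes the cut-off $\omega_1\le 15$ and hence the feasibility of the enumeration. As this Proposition is precisely \cite[Cor. 2]{CSST}, in practice I would invoke their completed computation.
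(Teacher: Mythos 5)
Your proposal is correct and ends up where the paper does: the paper offers no proof of this Proposition at all, stating only that the $n=1$ case ``has been completely resolved'' by Cohen, Silva, Sutherland and Trudgian and citing \cite[Cor.\ 2]{CSST}, which is exactly the citation you fall back on. Your preceding sketch (character-sum expansion via the primitivity indicator, Weil bounds giving $q>16^{\omega_1}$, the $\omega_1\le 15$ cut-off, interval enumeration and direct verification) is a reasonable account of how such a result is established, and your observation that the hybrid machinery degenerates at $n=1$ is accurate, but none of it is required by the paper, which treats the statement as an external input.
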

In what follows, assume that $n > 1$. For fixed values of $\omega_n \in \{1,...,15\}$ we could now check which pairs $(q,n)$ give rise to a value $q^n \in (\prod_{i=1}^{\omega_n} s_i,16^{\omega_n}]$. If instead we apply the hybrid bound $H(n,\omega_n)$ and iterate over $n \in \{1,...,4 \omega_n\}$, we need only check if we have any $q^n \in (H(n,\omega_n)^n,16^{\omega_n}]$. The search intervals for the trivial and hybrid bounds are given in Figures \ref{trivLLP} and \ref{hybLLP}, respectively. Empty search interval pairs $(n,\omega_n)$ are unmarked, whereas potential exceptions are marked black. 
\begin{figure}[!ht]
\centering
\caption{The trivial bound intervals for primitive element sums.}
\includegraphics[width=0.9 \textwidth , height = 0.25 \textwidth]{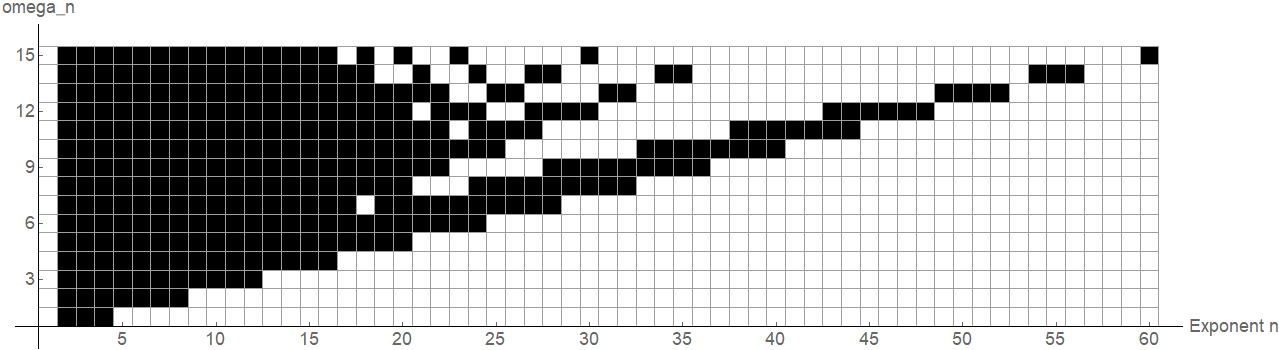}
\label{trivLLP}
\end{figure} 
\begin{figure}[!ht]
\centering
\caption{The hybrid bound intervals for primitive element sums.}
\includegraphics[width=0.9 \textwidth , height = 0.25 \textwidth]{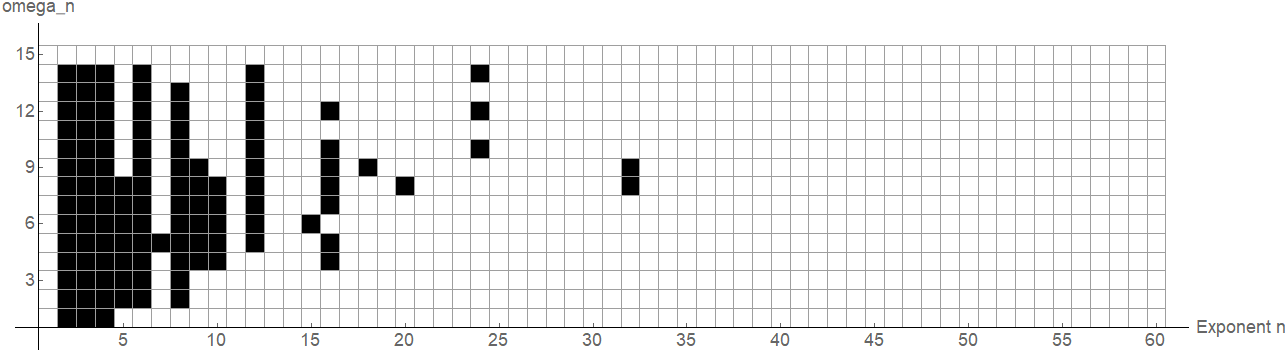}
\label{hybLLP}
\end{figure} \\
The hybrid bound is much more efficient at ruling out potential counterexamples compared to the trivial bound. So far, we have only checked the non-emptiness of each interval $(H(n,\omega_n)^n,16^{\omega_n}]$. The natural next step is to incorporate the presumed value of $\omega_n$ in such an interval. For each pair $(n,\omega_n)$ with non-empty search interval, we check if there exists any prime power $q \in (H(n,\omega_n),16^{\omega_n/n}]$ satisfying $\omega(q^n-1)=\omega_n$. This dramatically reduces our search space and leaves us with only a small list of exceptional pairs $(n,\omega_n)$, marked black in Figure \ref{hybimpLLP}.
\begin{figure}[ht]
\centering
\caption{Existence of $q$-values in hybrid intervals.}
\includegraphics[width=0.9 \textwidth , height = 0.25  \textwidth]{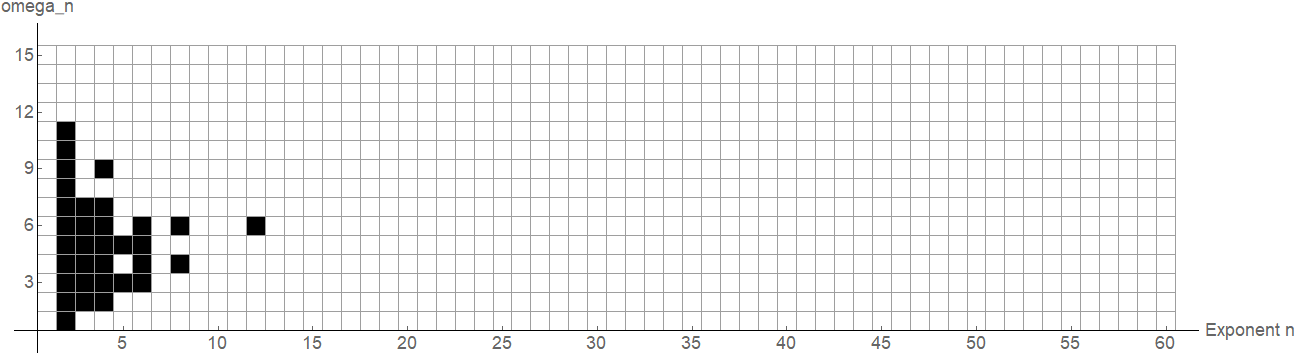}
\label{hybimpLLP}
\end{figure} \\
Our computation gives a sufficient condition for membership of $\mathcal{U}_n$.
\begin{Prop} \label{LLPprob}
Let $q$ be a prime power and $\mathcal{E}_\mathcal{U}$ be the set of 33 pairs $(a,b) \in \bb{Z}^2$ described in the appendix. Then $ (n,\omega_n) \notin \mathcal{E}_\mathcal{U} $ implies $ q \in \mathcal{U}_n $.
\end{Prop}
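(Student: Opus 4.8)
The plan is to confine the prime powers $q$ that could possibly fail $q \in \mathcal{U}_n$ to a finite, explicitly computable region of the $(n,\omega_n)$-plane, and then to resolve each candidate pair by a direct search. The engine is the contrapositive of Proposition \ref{LLPtheo}: writing $\omega_n = \omega(q^n-1)$, if $q \notin \mathcal{U}_n$ then $q \leq 2^{4\omega_n/n}$, that is, $q^n \leq 16^{\omega_n}$. First I would pair this ceiling with a floor coming from our hybrid machinery. By Theorem \ref{hybridb} (or Theorem \ref{modramb} in the case $n = 2^a$) we have $q \geq H(n,\omega_n)$, so any exceptional prime power must satisfy
$$ H(n,\omega_n)^n < q^n \leq 16^{\omega_n}. $$
In particular, no exception can occur for a pair $(n,\omega_n)$ whose associated interval is empty.

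Next I would carry out the finiteness reduction indicated in Remark \ref{shortint}. Using only the trivial floor $\prod_{i=1}^{\omega_n} s_i \leq q^n$ one has $\prod_{i=1}^{\omega_n} s_i < 16^{\omega_n}$, which forces $\omega_n \leq 15$; and the equivalent form $q > 2^{4\omega_n/n}$ forces $n \leq 4\omega_n \leq 60$ in order for $q$ to be a genuine prime power exceeding $1$. Hence every potential exception lies among the finitely many pairs $(n,\omega_n)$ with $1 \leq \omega_n \leq 15$ and $1 \leq n \leq 4\omega_n$. Replacing the trivial floor by the sharper hybrid floor $H(n,\omega_n)$ only shrinks each window, so this candidate list is preserved and in many cases trimmed, as seen by comparing Figures \ref{trivLLP} and \ref{hybLLP}.

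For each surviving pair $(n,\omega_n)$ I would then perform the enumeration: test whether there actually exists a prime power $q$ in the tightened window $(H(n,\omega_n),\, 16^{\omega_n/n}]$ whose $q^n-1$ has exactly $\omega_n$ distinct prime factors. A pair for which no such $q$ exists may be discarded, since then every prime power $q$ with $\omega(q^n-1)=\omega_n$ must exceed $2^{4\omega_n/n}$ and therefore lies in $\mathcal{U}_n$ by Proposition \ref{LLPtheo}. Collecting exactly the pairs that survive this test yields the set $\mathcal{E}_\mathcal{U}$ of $33$ pairs recorded in the appendix, and the contrapositive delivers the stated implication $(n,\omega_n) \notin \mathcal{E}_\mathcal{U} \Rightarrow q \in \mathcal{U}_n$.

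The main obstacle is ensuring the finite verification is both complete and correct. Completeness hinges on $H(n,\omega_n)$ being a valid lower bound uniformly across all admissible $(n,\omega_n)$ — this is precisely where Theorems \ref{hybridb} and \ref{modramb} are invoked, and the correct branch must be selected for $n = 2^a$. Correctness requires the factor counts $\omega(q^n-1)$ to be computed exactly throughout each window, which is the computationally heaviest step since $q^n-1$ grows rapidly; the practical value of the hybrid floor over the trivial one is exactly that it shortens each interval enough to render these factorisations feasible.
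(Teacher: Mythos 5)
Your proposal is correct and follows essentially the same route as the paper: the contrapositive of Proposition \ref{LLPtheo} gives the ceiling $q^n \le 16^{\omega_n}$, the finiteness reduction $\omega_n \le 15$ and $n \le 4\omega_n$ matches Remark \ref{shortint} and the subsequent corollary, and the hybrid floor $H(n,\omega_n)$ followed by exact verification of $\omega(q^n-1)=\omega_n$ in each window is precisely how the paper arrives at the $33$ pairs of $\mathcal{E}_\mathcal{U}$. The only detail worth keeping in view (shared with the paper's own presentation) is that Proposition \ref{LLPtheo} is stated for $q$ coprime to $n$, so the case $p \mid n$ should be accounted for when certifying completeness of the candidate list.
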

Although the primitive element sum problem is not completely resolved, the techniques used highlight the relative effectiveness of our hybridised bounds over the trivial bound \eqref{trivialb}. As in Proposition \ref{lineprob}, we should expect that most of the pairs $(n,\omega_n) \in \mathcal{E}_\mathcal{U}$ do not give rise to any genuine non-members $q \notin \mathcal{U}_n$.

\clearpage
\printbibliography[heading = bibintoc, title = Bibliography]

@article{BCLT,
title = {Primitive elements with prescribed traces},
journal = {Finite Fields Appl.},
volume = {84},
year = {2022},
pages = {102094},
author = {A. R. Booker and S. D. Cohen and N. Leong and T. Trudgian}
}

@article{Cohen,
author = {S. D. Cohen},
year = {2010},
pages = {113-127},
title = {Primitive elements on lines in extension fields},
volume = {518},
journal = {Contemp. Math.}
}

@article{LLP,
author = {Q. Liao and J. Li and K. Pu},
year = {2016},
pages = {259-266},
title = {On the existence for some special primitive elements in finite fields},
volume = {37},
journal = {Chinese Ann. Math. Ser. B}
}

@article{BCST,
title={Existence results for primitive elements in cubic and quartic extensions of a finite field},
volume={88},
number={316},
journal={Math. Comp.},
publisher={American Mathematical Society (AMS)},
author={Bailey, G. and Cohen, S. D. and Sutherland, N. and Trudgian, T.},
year={2018},
pages={931–947} }

@article{CSST,
title={Linear combinations of primitive elements of a finite field},
volume={51},
journal={Finite Fields Appl.},
author={Cohen, S. D. and Oliveira e Silva, T. and Sutherland, N. and Trudgian, T.},
year={2018},
pages={388–406} }

@unpublished{Takshak,
author = {S. Takshak and G. Kapetanakis and R. Sharma},
year = {2024},
month = {10},
title = {$\mathbb{F}_q$-primitive points on varieties over finite fields},
note = {ArXiv Preprint}
}

@article{Rani,
    author = {M. Rani and A. K. Sharma and S. K. Tiwari and A. Panigrahi} ,
    title = {Inverses of r-primitive k-normal elements over finite fields},
    journal = {Ramanujan J},
    volume = {64},
    year = {2024},
    pages = {723-747}
}

@book{washington,
  author    = {Lawrence C. Washington},
  title     = {Cyclotomic Fields},
  series    = {Graduate Texts in Mathematics},
  volume    = {83},
  year      = {1997},
  publisher = {Springer}
}

@article{Sharma1,
  title = {Existence of Primitive Pairs with Prescribed Traces over Finite Fields},
  author = {Sharma, Hariom and Sharma, R. K.},
  journal = {Commun. Algebra},
  volume = {49},
  number = {4},
  pages = {1773-1780},
  year = {2021}
}

@article{Sharma2,
  title = {Existence of Primitive Normal Pairs with One Prescribed Trace over Finite Fields},
  author = {Sharma, Hariom and Sharma, R. K.},
  journal = {Finite Fields Appl.},
  volume = {72},
  year = {2021}}

@article{Lemos,
  title = {On Arithmetic Progressions in Finite Fields},
  author = {Lemos, Abílio and Neumann, Victor G. L. and Ribas, Sávio},
  journal = {Des. Codes Cryptogr.},
  volume = {91},
  number = {6},
  pages = {2323--2346},
  year = {2023}}

\section*{Appendix}
We describe the set $E_5$ referenced in Proposition \ref{lineprob}: \\
{  $E_5 = \{$ 2, 3, 4, 5, 7, 8, 9, 11, 13, 16, 17, 19, 23, 25, 27, 29, 31, 32, 37, 
41, 43, 47, 49, 53, 59, 61, 64, 67, 71, 73, 79, 81, 83, 89, 97, 101, 
103, 107, 109, 113, 121, 125, 127, 128, 131, 137, 139, 149, 151, 157, 
163, 167, 169, 173, 179, 181, 191, 193, 197, 199, 211, 223, 227, 229, 
233, 239, 241, 243, 251, 256, 257, 269, 271, 277, 281, 283, 289, 293, 
307, 311, 313, 317, 331, 337, 343, 347, 349, 353, 361, 367, 373, 379, 
383, 389, 397, 401, 409, 419, 421, 431, 433, 439, 443, 449, 457, 461, 
463, 467, 487, 491, 499, 509, 512, 521, 523, 529, 541, 547, 569, 571, 
577, 587, 593, 599, 601, 607, 613, 617, 619, 625, 631, 641, 643, 647, 
653, 659, 661, 673, 677, 683, 691, 701, 709, 719, 727, 729, 733, 739, 
743, 751, 757, 761, 769, 773, 787, 797, 811, 821, 823, 827, 829, 839, 
841, 853, 857, 859, 863, 877, 881, 883, 887, 907, 911, 919, 929, 937, 
941, 947, 953, 961, 967, 971, 977, 991, 997, 1009, 1013, 1019, 1021, 
1024, 1031, 1033, 1039, 1051, 1061, 1063, 1069, 1087, 1091, 1093, 
1097, 1103, 1109, 1117, 1123, 1151, 1153, 1163, 1171, 1181, 1193, 
1213, 1217, 1223, 1229, 1231, 1237, 1259, 1277, 1279, 1283, 1289, 
1291, 1301, 1303, 1307, 1321, 1327, 1331, 1361, 1367, 1369, 1373, 
1381, 1399, 1409, 1423, 1429, 1433, 1439, 1447, 1451, 1453, 1471, 
1483, 1489, 1499, 1511, 1531, 1543, 1549, 1567, 1571, 1583, 1597, 
1607, 1609, 1621, 1627, 1637, 1667, 1669, 1681, 1693, 1697, 1699, 
1709, 1721, 1723, 1741, 1747, 1753, 1759, 1777, 1783, 1789, 1801, 
1831, 1849, 1861, 1867, 1871, 1873, 1879, 1901, 1907, 1933, 1949, 
1951, 1993, 1999, 2003, 2011, 2017, 2027, 2029, 2053, 2081, 2083, 
2087, 2089, 2113, 2131, 2137, 2141, 2143, 2161, 2179, 2197, 2203, 
2209, 2221, 2237, 2251, 2267, 2269, 2281, 2293, 2297, 2311, 2333, 
2341, 2347, 2357, 2371, 2377, 2381, 2389, 2393, 2401, 2411, 2437, 
2467, 2473, 2503, 2521, 2531, 2539, 2551, 2557, 2593, 2621, 2633, 
2647, 2659, 2671, 2683, 2689, 2707, 2713, 2731, 2749, 2767, 2791, 
2797, 2803, 2809, 2833, 2851, 2857, 2861, 2887, 2927, 2953, 2963, 
2971, 3001, 3011, 3019, 3023, 3037, 3041, 3061, 3067, 3109, 3121, 
3125, 3181, 3187, 3191, 3217, 3221, 3253, 3259, 3271, 3301, 3319, 
3331, 3361, 3391, 3433, 3457, 3463, 3481, 3511, 3529, 3541, 3547, 
3571, 3583, 3607, 3613, 3631, 3643, 3691, 3697, 3721, 3727, 3733, 
3739, 3767, 3821, 3823, 3851, 3853, 3877, 3907, 3911, 3919, 3931, 
3943, 4003, 4019, 4021, 4051, 4057, 4096, 4111, 4129, 4159, 4201, 
4219, 4229, 4231, 4243, 4261, 4271, 4273, 4327, 4441, 4447, 4489, 
4513, 4519, 4523, 4561, 4567, 4591, 4603, 4621, 4651, 4657, 4663, 
4691, 4733, 4759, 4783, 4789, 4813, 4831, 4909, 4931, 4933, 4951, 
4957, 4999, 5011, 5041, 5101, 5119, 5167, 5179, 5329, 5437, 5449, 
5503, 5521, 5531, 5581, 5591, 5641, 5659, 5701, 5743, 5779, 5791, 
5839, 5851, 5857, 5881, 5923, 6007, 6037, 6043, 6073, 6091, 6121, 
6133, 6163, 6217, 6241, 6247, 6271, 6301, 6361, 6373, 6421, 6427, 
6451, 6571, 6581, 6619, 6637, 6679, 6691, 6733, 6763, 6781, 6791, 
6841, 6859, 6889, 6991, 7039, 7177, 7243, 7351, 7411, 7417, 7549, 
7561, 7573, 7591, 7603, 7687, 7723, 7753, 7921, 7951, 8011, 8089, 
8161, 8191, 8317, 8353, 8419, 8431, 8647, 8737, 8779, 8821, 8893, 
8941, 8971, 9001, 9091, 9109, 9199, 9241, 9283, 9409, 9421, 9521, 
9631, 9661, 9769, 9781, 9871, 10141, 10201, 10321, 10531, 10609, 
10651, 10711, 10831, 10861, 10891, 11047, 11071, 11131, 11257, 11311, 
11317, 11449, 11551, 11701, 11719, 11731, 11881, 11971, 12391, 12433, 
12541, 12769, 13171, 13399, 13567, 13831, 13921, 14281, 14479, 14551, 
14821, 15331, 15511, 15541, 15601, 15625, 15877, 15991, 16339, 16927, 
17161, 17491, 17791, 17851, 18181, 18481, 18769, 19321, 19501, 19891, 
20011, 20431, 20749, 21211, 21751, 22201, 22621, 23011, 23311, 24781, 
25117, 26041, 27691, 28561, 28771, 29671, 30661, 32761, 41611, 42331, 
46411, 51871, 60901, 62791$\}$. } \\ 

\thispagestyle{plain}
We describe the set $\mathcal{E}_\mathcal{U}$ of pairs $(a,b) \in \bb{Z}^2$ referenced in Proposition \ref{LLPprob}: \\
{  $ \mathcal{E}_\mathcal{U} = \{$ (2,1), (2,2), (2,3), (2,4), (2,5), (2,6), (2,7), (2,8), (2,9), (2,10), (2,11), (3,2), (3,3), (3,4), (3,5), (3,6), (3,7), (4,2), (4,3), (4,4), (4,5), (4,6), (4,7), (4,9), (5,3), (5,5), (6,3), (6,4), (6,5), (6,6), (8,4), (8,6), (12,6)$\}. $ } 
\end{document}